\journal{Journal de Math\'ematiques Pures et Appliqu\'ees}
\def\dd{{\bf d}}
\def\Dop{{\mathcal D}}
\def\Z{{\mathbb Z}}
\def\C{{\mathbb C}}
\def\Q{{\mathbb Q}}
\def\Ker{\mathrm{Ker}}
\newcommand\uno{{\mathbbm 1}}
\newcommand{\Sets}{\mathbf{\bf Sets}}
\DeclareMathOperator{\Vectf}{\bf Vect^{\it fg}}
\DeclareMathOperator{\U}{\mathcal U}
\DeclareMathOperator{\Mn}{\bf M}
\DeclareMathOperator{\At}{At}
\DeclareMathOperator{\SL}{SL}
\DeclareMathOperator{\Char}{char}
\DeclareMathOperator{\Id}{Id}
\DeclareMathOperator{\diag}{diag}
\DeclareMathOperator{\Span}{span}
\DeclareMathOperator{\Um}{\rm U}
\DeclareMathOperator{\Ind}{\mathrm Ind}
\DeclareMathOperator{\Sym}{{\rm Sym}}
\DeclareMathOperator{\Der}{Der}
\DeclareMathOperator{\DMod}{\bf DMod}
\DeclareMathOperator{\DAlg}{\bf DAlg}
\DeclareMathOperator{\Alg}{\bf Alg}
\DeclareMathOperator{\Cat}{\mathcal{C}}
\DeclareMathOperator{\Frac}{Frac}
\DeclareMathOperator{\Vect}{\bf Vect}
\DeclareMathOperator{\Hom}{Hom}
\DeclareMathOperator{\Rep}{\bf Rep}
\DeclareMathOperator{\GL}{GL}
\DeclareMathOperator{\Aut}{Aut}
\DeclareMathOperator{\Gal}{Gal}
\DeclareMathOperator{\id}{id}
\DeclareMathOperator{\End}{End}
\newcommand{\G}{\mathbf{G}}
\DeclareMathOperator{\im}{Im}
\DeclareMathOperator{\Spec}{Spec}
\newcommand{\Le}{\leqslant}
\newcommand{\Ge}{\geqslant}
\theoremstyle{plain}
\newtheorem{theorem}{Theorem}[section]
\newtheorem{lemma}[theorem]{Lemma}
\newtheorem{proposition}[theorem]{Proposition}
\theoremstyle{definition}
\newtheorem{definition}[theorem]{Definition}
\newtheorem{example}[theorem]{Example}
\theoremstyle{remark}
\newtheorem{remark}[theorem]{Remark}
\begin{document}
\begin{frontmatter}

\begin{keyword}
differential Tannakian categories\sep isomonodromic differential equations\sep
differential Galois theory
\MSC[2010]{primary 12H05\sep secondary 12H20\sep 13N10\sep 20G05\sep 20H20\sep 34M15\sep 34M56\sep 37K20}
\end{keyword}

\title{Isomonodromic differential equations and differential categories}

\author{Sergey Gorchinskiy\fnref{SG}}
\ead{gorchins@mi.ras.ru}
\address{Steklov Mathematical Institute, Gubkina str. 8, Moscow, 119991, Russia}

\author{Alexey Ovchinnikov\fnref{AO}}
\ead{aovchinnikov@qc.cuny.edu}
\address{CUNY Queens College, Department of Mathematics, 65-30 Kissena Blvd,
Queens, NY 11367, USA\\
CUNY Graduate Center, Department of Mathematics, 365 Fifth Avenue,
New York, NY 10016, USA}

\fntext[SG]{S. Gorchinskiy was supported by the grants RFBR 11-01-00145, 12-01-31506, 12-01-3302, 13-01-12420, AG Laboratory NRU HSE, RF government grant, ag.11.G34.31.0023, and by Dmitry ZiminÕs Dynasty Foundation.}

\fntext[AO]{A. Ovchinnikov was supported by the grants: NSF  CCF-0952591 and  PSC-CUNY  No.~60001-40~41.}

\begin{abstract}
We study isomonodromicity of systems of parameterized linear differential equations and related conjugacy properties of linear differential algebraic groups by means of differential categories. We prove that isomonodromicity is equivalent to isomonodromicity with respect to each parameter separately under a filtered-linearly closed assumption on the field of functions of parameters. Our result implies that one does not need to solve any non-linear differential equations to test isomonodromicity anymore. This result cannot be further strengthened by weakening the requirement on the parameters as we show by giving a counterexample. Also, we show that isomonodromicity is equivalent to conjugacy to constants of the associated parameterized differential Galois group, extending a result of P.~Cassidy and M.~Singer, which we also prove categorically. We illustrate our main results by a series of examples, using, in particular, a relation between Gauss--Manin connection and parameterized differential Galois groups.
\bigskip

\noindent{\bf R\'esum\'e}
\medskip

On \'etudie l'isomonodromie des syst\`emes d'\'equations diff\'erentielles
lin\'eaires param\'etr\'ees et les propri\'et\'es li\'ees \`a la conjugaison des
groupes alg\'ebriques diff\'erentiels lin\'eaires en utilisant les
cat\'egories diff\'erentielles. On d\'emontre que l'isomonodromie est
\'equivalente \`a l'isomonodromie relative \`a chaque param\`etre pris
s\'epar\'ement, si le corps des fonctions des param\`etres est filtr\'e
lin\'eairement clos. Ce r\'esultat implique quÕil nÕest pas n\'ecessaire
de r\'esoudre des \'equations diff\'erentielles non lin\'eaires pour tester
lÕisomonodromie. Un contre-exemple montre qu'on ne peut pas am\'eliorer
ce r\'esultat en affaiblissant la condition sur les param\`etres. On
d\'emontre, en termes de cat\'egories, que l'isomonodromie est
\'equivalente, \`a une conjugaison pr\`es, au fait que le groupe de
Galois diff\'erentiel param\'etr\'e associ\'e est constant, g\'en\'eralisant ainsi
un r\'esultat de P. Cassidy et M. Singer.  On illustre nos r\'esultats
fondamentaux par une s\'erie d'exemples utilisant, en particulier, un
lien entre la connexion de Gauss--Manin et les groupes de Galois
diff\'erentiels param\'etr\'es.
\end{abstract}

\end{frontmatter}

\section{Introduction}
A system of parameterized linear differential equations is a system of linear differential equations whose coefficients are functions of principal variables $x_1,\ldots,x_n$ and parameters $t_1,\ldots,t_d$ and derivations only with respect to $x_1,\ldots,x_n$ appear in the system.
We say that such a system is isomonodromic if it can be extended to a consistent system of linear differential equations with derivations with respect to all of  $x_1,\ldots,x_n,t_1,\ldots,t_d$.
That is, one requires that the extended system satisfies all integrability conditions with respect to the principal and parametric  variables.
In this paper, we study such isomonodromic systems via the parameterized Picard--Vessiot (PPV) theory~\cite{PhyllisMichael} and differential Tannakian categories~\cite{GGO,difftann,OvchTannakian,Moshe,Moshe2011,Besser}.

To verify isomonodromicity of a system of parameterized linear differential equations, say, with one principal variable $x$ and $d$ parameters $t_1,\ldots, t_d$ explicitly means to find
$d$ extra matrices that satisfy $\binom{d+1}{2}$ integrability conditions \cite[Definition~3.8]{PhyllisMichael}, which form a system of linear and non-linear differential equations. We improve this by showing that it is enough to find matrices that satisfy only~$d$ integrability conditions for pairs of derivations $(\partial_x,\partial_{t_i})$, which are linear differential equations, under a filtered-linearly closed assumption (Definition~\ref{defin-good}) on the field of functions of parameters (Theorems~\ref{thm:54} and~\ref{theor-expl}).  Namely, the existence of the latter matrices implies the existence of (possibly, different) matrices that satisfy all $\binom{d+1}{2}$ integrability conditions.
In other words, our result removes all non-linear differential equations from the integrability conditions that have to be tested, which now enables one to use the powerful methods of differential Galois theory to test isomonodromicity.

This result is non-trivial not only because of the method of proof (which uses differential categories~\cite{GGO} and CDG-algebras~\cite{Positselskii}) but also because it is counterintuitive. The initial explicit steps for this result restricted to $2\times 2$ systems with the parameterized differential Galois group Zariski dense in $\SL_2$ can be found in \cite[Proposition~4.4]{Dreyfus} (see also \cite[Theorem~1.3, Chapter~2]{SitSL2}).
Note that the condition on the field to be filtered-linearly closed is, indeed, necessary as is shown in Example~\ref{examp-isomon}. This example is based on iterated integrals.

A similar but more specialized question was treated in~\cite{JM,JMU} for the case of rational functions in the principal variable. Using analytic methods, it is shown that, for $d$ extra matrices of a certain special type, $d$ integrability conditions imply all $\binom{d+1}{2}$ integrability conditions for the same matrices. Additionally, it is proved in~\cite{JM,JMU} that, if the differential equation is isomonodromic (when restricted only to rational functions in the principal variable), then one can choose extra matrices of the special type discussed above (for more details, see Section~\ref{sec:anal}).

Given a system of parameterized linear differential equations, the PPV theory associates a parameterized differential Galois group, which can be represented by groups of invertible matrices whose entries are in the field of constants, that is, the field of functions of the parameters $t_1,\dots,t_d$. Moreover, these groups are linear differential algebraic groups (LDAGs), that is, groups of matrices satisfying a system of polynomial differential equations with respect of the parametric derivations
\cite{Cassidy,CassidyRep,KolDAG,OvchRecoverGroup,OvchTannakian}. Using descent for connections (Lemma~\ref{lemma-main}), we prove in Theorem~\ref{theor-main} that, under the filtered-linearly closed assumption on the field of constants, a system
of parameterized linear differential equations is isomonodromic (Definition~\ref{defin-isomon}) if and only if its Galois group is conjugate (possibly, over an extension field of the field of constants) to a group of matrices
whose entries are constant functions in the parameters. This extends the corresponding result in~\cite{PhyllisMichael}, which required the
field of constants to be differentially closed. Recall that a differential field is differentially closed if it contains solutions of all consistent systems of polynomial differential equations with coefficients in the field. Note that, even in the case of a differentially closed field of constants, our proof, based on differential Tannakian categories, is different from the one given in~\cite{PhyllisMichael}.

We construct examples showing that, in general, one really needs to take an extension of the field of constants to obtain the above conjugacy (Examples~\ref{examp-1} and~\ref{examp-2}). The construction of the examples uses an explicit description of  Galois groups for PPV extensions defined by integrals (Propositions~\ref{prop-intGalois} and~\ref{prop-PPV}), which seems to have interest in its own right. Namely, we interpret such differential Galois groups in terms of Gauss--Manin connections (Section~\ref{sec:GM}). More concretely, the examples involve the incomplete Gamma-function and the Legendre family of elliptic curves (see also \cite{Carlos2} for the computation point of view).
Note that the relation between the PPV theory and Gauss--Manin connection was also elaborated in~\cite{MichaelLAG}.

Recall that the Galois groups in the PPV theory are LDAGs. As noted above, isomonodromicity corresponds to conjugation to constants  for LDAGs. In this way, our Theorem~\ref{theor-conjug} corresponds to Theorem~\ref{thm:54} and says that if a LDAG is conjugate to groups of matrices whose entries are constants with respect to each derivation separately, then there is a common conjugation matrix, under the filtered-linearly and linearly closed assumption on the differential field. This matrix may have entries in a Picard--Vessiot extension of the base field. We construct an example showing that, in general, one  needs to take a Picard--Vessiot extension (Example~\ref{example-PV}).

As an application, we obtain a generalization of \cite[Theorem~3.14]{diffreductive}, which characterizes semisimple categories of representations of LDAGs in the case when the ground field is differentially closed and has only one derivation. In Theorem~\ref{thm:redsemisimple}, we improve this result by showing a
more general statement without these inconvenient restrictions to differentially closed fields and the case of just one derivation.

Our method is based on the new notion of differential objects in differential categories (Definition~\ref{defin-Dkobject}). We prove that there is a differential structure on an object $X$ in a differential category over a differential field $(k,D_k)$ if and only if there is a differential structure on $X$ with respect to any derivation $\partial\in D_k$, provided that $(k,D_k)$ is filtered-linearly closed (Proposition~\ref{prop-main}). This result is applied to both isomonodromic differential equations and LDAGs. We show in Example~\ref{example-new} that this result is not true over an arbitrary differential field already for the category of representations of a LDAG over $\Q(t_1,t_1)$. The example uses the Heisenberg group. Note that the application to isomonodromic differential equations requires that we work with arbitrary differential categories, not just with differential Tannakian categories or categories of representations of LDAGs (see also the discussion at the end of Section~\ref{sec:prelnot}).

In \cite{Landesman}, Landesman initiated the parameterized differential Galois theory in a more general setting based on Kolchin's axiomatic development of the differential algebraic group theory \cite{KolDAG}.
Galois theories in which Galois groups are LDAGs also appear in \cite{CharlotteArxiv,CharlotteComp,hardouin_differential_2008,HDV,CharlotteLucia,CharlotteLuciaDescent,Wibmer3,Umemura},  with the initial algorithm, when the Galois group is a subgroup in $\SL_2$, given in \cite{Dreyfus} and analytic aspects studied in~\cite{ClaudineMichael2,ClaudineMichael}. The first complete algorithm for the case of one parameter was given in~\cite{Carlos}. For the case of several parameters, further algorithms for reductive and unipotent parameterized differential Galois groups appeared in \cite{MiOvSi2,MiOvSi}, which rely on the main result of the present paper. The representation theory for LDAGs was also developed in \cite{OvchRecoverGroup,MinOvRepSL2}, and the relations with factoring partial differential equations was discussed in~\cite{PhyllisMichaelJH}. Analytic aspects of isomonodromic differential equations were studied by many authors, let us mention \cite{JM,JMU,MalgrangeSing,MalgrangeSing2}. See also a survey~\cite{Sabbah} of Bolibrukh's results on isomonodromicity and the references given there. An explicit computational approach to testing whether a system of difference equations with differential or difference parameters is isomonodromic can be found in~\cite{BOS2013,Ov2013}.

The paper is organized as follows. We start by recalling the basic definitions and properties of differential algebraic groups, differential Tannakian categories, and the PPV theory in Section~\ref{sec:prelnot}. Most of our notation is introduced in this section. The following section contains our main technical tools, Propositions~\ref{prop-main} and~\ref{prop-descent}. Section~\ref{sec:LDAG} deals with conjugating linear differential algebraic groups to constants over not necessarily differentially closed fields. The results from Section~\ref{sec:GM}, where we also establish a relation with Gauss--Manin connection, are used in order to construct non-trivial examples to Theorem~\ref{theor-main}. In Section~\ref{sec:IDE}, we show our
main results on isomonodromic systems of parameterized linear differential equations as well as illustrate them with examples justifying the necessity of the hypotheses in our main result. Also, we provide an explicit proof of the main result in Section~\ref{sec:explicit}. This proof can be used in designing algorithms. We also give an analytic interpretation of our results including the reasons that support the conclusion of the above example.

The authors thank P.~Cassidy, L.~Di Vizio, A.~Its, B.~Malgrange, A.~Minchenko, O.~Mokhov, T.~Scanlon, M.~Singer, and D.~Trushin for very helpful conversations and comments. We are also highly grateful to the referee for the helpful suggestions.

\section{Notation and preliminaries}\label{sec:prelnot}

Most of the notation and notions that we use are taken
from~\cite{GGO}. All rings are assumed to be commutative and having
a unit.
In the paper, $(k,D_k)$ stays for a differential field of zero
characteristic, that is, $k$ is a field and $D_k$ is a
finite-dimensional $k$-vector space with a Lie bracket and a
$k$-linear map of Lie rings $$D_k\to \Der(k,k)$$ that satisfies a
compatibility condition (see~\cite[Definition~3.1]{GGO}). For example, if $\partial_1,\ldots,\partial_d$ denote
commuting derivations from $k$ to itself (possibly, some of them are
zero), then $(k,D_k)$ with $$D_k:=k\cdot\partial_1\oplus\ldots
\oplus k\cdot\partial_d$$ is a differential field.

In general, the map $D_k\to \Der(k,k)$ can be non-injective, and it is possible that there is no commuting basis in $D_k$ (see~\cite[Example~3.5]{GGO}). In particular, differential fields as above include finite-dimensional Lie algebras. Moreover, many constructions become more transparent and easier to be derived if one does not choose a basis in the $k$-vector space $D_k$, for example, the definition of the de Rham complex below. This motivates our generalization of a more common notion of a differential field  (a field with commuting derivations).

Let $D_k$ in the superscript  denote taking $D_k$-constants, that is, the elements annihilated by all $\partial\in D_k$. Put $k_0:=k^{D_k}$. Put $\Omega_k:=D_k^{\vee}$. We have the de Rham complex $\Omega_k^{\bullet}$
$$
\begin{CD}
0@>>>\Omega^0_{k}@>\dd>>\Omega^1_{k}@>\dd>>\Omega^2_{k}@>\dd>>\ldots\,,
\end{CD}
$$
where $\Omega^i_k:=\wedge^i_k\Omega_k$, $i \Ge 1$, and we put  $\Omega^0_{k}:=k$ (see~\cite[Remark~3.4]{GGO}). Note that $\dd$ is $k_0$-linear, $\dd\circ\dd=0$, and $\dd$ is uniquely defined by the following Leibniz rule: $$\dd(\omega\wedge\eta)=\dd\omega\wedge\eta+(-1)^i\omega\wedge\dd\eta$$ for all $\omega\in\Omega^i_k$, $\eta\in\Omega^j_k$.

Denote the category of sets by $\Sets$. Denote the category of
$k$-vector spaces by $\Vect(k)$. Denote the category of $k$-algebras
by $\Alg(k)$. Denote the category of $D_k$-modules over $k$ by
$\DMod(k,D_k)$ (see~\cite[Definition~3.19]{GGO}). Denote the
category of $D_k$-algebras over $k$ by $\DAlg(k,D_k)$
(see~\cite[Definition~3.12]{GGO}). Denote the ring of $D_k$-polynomials in differential indeterminates $y_1,\ldots,y_n$ (see~\cite[Definition~3.12]{GGO}) by
$$
k\{y_1,\ldots,y_n\}.
$$
We say that a (possibly,
infinite-dimensional) $D_k$-module $M$ is {\it trivial} if the
multiplication map $k\otimes_{k_0}M^{D_k}\to M$ is an isomorphism
(by~\cite[Lemma~1.7]{Michael}, this map is always injective).

We say that a differential field $(k,D_k)$ is {\it linearly $D_k$-closed} if $(k,D_k)$ has no non-trivial Picard--Vessiot extensions, that is, all finite-dimensional $D_k$-modules over $k$ are trivial (see also \cite{Magid2001,ScanlonThesis,Scanlon2000}, \cite[\S3]{MagidBook}, \cite[\S0.5]{KolDAG} for the existence and use of such fields, and \cite{BMS} for analogues for difference fields). One can also iteratively apply \cite[Embedding Theorem]{Seidenberg1958} to realize such fields (if they are countable) as germs of meromorphic functions in $\dim_k(D_k)$ variables.

A functor $X:\DAlg(k,D_k)\to\Sets$ is represented by a
$D_k$-algebra $A$ if there is a functorial isomorphism
$$
X(R)\cong \Hom_{D_k}(A,R)
$$
for any $D_k$-algebra $R$. A linear $D_k$-group is a
group-valued functor on $\DAlg(k,D_k)$ that is represented by a
$D_k$-finitely generated $D_k$-Hopf algebra. Given a (pro-)linear
$D_k$-group $G$, denote the category of finite-dimensional
representations of $G$ as an affine group scheme over $k$ by
$\Rep(G)$.

Given a functor $X:\Alg(k)\to\Sets$, one traditionally denotes also
its composition with the forgetful functor $\DAlg(k,D_k)\to\Alg(k)$
by $X$. If $X$ is representable on $\Alg(k)$, then $X$ is also
representable on $\DAlg(k,D_k)$. In other words, the forgetful
functor $\DAlg(k,D_k)\to\Alg(k)$ has a left adjoint (for example,
see~\cite[\S1.2]{Gillet}), which is usually called a
prolongation. In particular, we have a representable functor
$${\mathbb A}^n:R\mapsto R^{\oplus n},$$ where $R$ is a $D_k$-algebra. Also, given a finite-dimensional $k$-vector space $V$, we have a linear $D_k$-group
$$
\GL(V):R\mapsto \Aut_R(R\otimes_k V).
$$

Given a functor $Y:\Alg(k_0)\to\Sets$, let $Y^c$ denote its
composition with the functor of $D_k$-invariants
$$
\DAlg(k,D_k)\to\Alg(k_0),\quad R\mapsto R^{D_k}.
$$
We say that functors of type $Y^c$ are {\it constant}. If $Y$ is
represented by a $k_0$-algebra $B$, then $Y^c$ is represented by
\begin{equation}\label{eq:constalg}
k\otimes_{k_0}B
\end{equation}  with the natural $D_k$-structure, where $D_k$ acts by zero on $B$. Denote the
latter $D_k$-algebra by $B^c$ and also call it constant. If $H$ is a
linear algebraic group, then $H^c$ is a constant linear $D_k$-group.
In particular, we have a representable functor
$${({\mathbb A}^n)}^c:R\mapsto {\left(R^{D_k}\right)}^{\oplus n},$$ where $R$ is a $D_k$-algebra. Also, given a finite-dimensional $k_0$-vector space $V_0$, we have the linear $D_k$-group
$$
{\GL(V_0)}^c:R\mapsto \Aut_{R^{D_k}}{\left(R^{D_k}\otimes_{k_0}V_0\right)}.
$$
It follows that there is a morphism of linear
$D_k$-groups ${\GL(V_0)}^c\to \GL(V)$, where $V:=k\otimes_{k_0}V_0$.

Note that a $D_k$-algebra $A$ is constant if and only if $A$ is
trivial as a $D_k$-module. A $D_k$-finitely generated $D_k$-algebra
$A$ is constant if and only if there is an isomorphism of
$D_k$-algebras
$$
A\cong k\{y_1,\ldots,y_n\}/I,
$$
where $I\subset k\{y_1,\ldots, y_n\}$ is a $D_k$-ideal such that, for
all $\partial\in D_k$ and $i$, $1\Le i\Le n$, the differential
polynomial $\partial y_i$ is in $I$.

For a more explicit description of constant algebras, consider a functor $$X:\DAlg(k,D_k)\to \Sets$$ represented by a reduced $D_k$-finitely generated $D_k$-algebra. Then, by the differential
Nullstellensatz (see~\cite[Theorem~IV.2.1]{Kol}), $X$ is constant if and
only if there is a Kolchin closed embedding $X\subset {\mathbb A}^n$
over $(k,D_k)$ such that, for a $D_k$-closed field $\U$ over $k$
(equivalently, for any $\U$ as above), we have
$$
X(\U)\subset \U_0^n,\quad \U_0:=\U^{D_k},
$$
that is, all points in $X\subset {\mathbb A}^n$ have constant coordinates.

Given a $D_k$-object $X$ over $k$ (e.g., a $D_k$-module, a $D_k$-algebra, a linear $D_k$-group) and a $D_k$-field $l$ over $k$, let $X_l$ denote the $D_l$-object over $l$ obtained by the extension of scalars from $(k,D_k)$ to $(l,D_l)$, where $D_l:=l\otimes_k D_k$.

One finds the definition of a parameterized differential field in \cite[\S3.3]{GGO}. Recall that, for a parameterized differential field $(K,D_K)$ over $(k,D_k)$, one has a $K$-linear map $$D_K\to K\otimes_k D_k,$$ called a structure map. This defines a differential field $\left(K,D_{K/k}\right)$, where $D_{K/k}$ is the kernel of the structure map. Also, one has $K^{D_{K/k}}=k$. For example, if $$k=\C(t),\quad K=\C(t,x),\quad D_k=k\cdot \partial_t,\quad \text{and}\quad D_K=K\cdot\partial_x\oplus K\cdot\partial_{t},$$ then $(K,D_K)$ is a parameterized differential field over $(k,D_k)$ with $D_{K/k}=K\cdot\partial_x$.

Given a finite-dimensional $D_{K/k}$-module $N$ over $K$, one has the notion of a parameterized Picard--Vessiot (PPV) extension $L$ for $N$, where $L$ is a $D_K$-field over $K$. This was first defined in~\cite{PhyllisMichael} (see also~\cite[Definition~3.27]{GGO} for the present approach to parameterized differential fields). If $k$ is $D_k$-closed, then a PPV extension exists for any $N$ as above (see~\cite[Theorem~3.5(1)]{PhyllisMichael}). Given a PPV extension $L$, one shows that the group-valued functor
$$
\Gal^{D_K}(L/K):\DAlg(k,D_k)\to \Sets,\quad R\mapsto
\Aut^{D_K}(R\otimes_k A/R\otimes_k K)
$$
is a linear $D_k$-group (see~\cite[Lemma~8.2]{GGO}), which is called the parameterized differential Galois group of $L$ over $K$, where $A$ is the PPV ring associated to $L$ (see~\cite[Definition~3.28]{GGO}).

The main notion defined in~\cite{GGO} is that of a differential category. A $D_k$-category $\Cat$ over $k$ is an abelian $k$-linear tensor category together with exact $k$-linear endofunctors $\At^1_{\Cat}$ and $\At^2_{\Cat}$, called Atiyah functors, that satisfy a list of axioms (see~\cite[\S\S4.2,4.3]{GGO}). In particular, for any object $X$ in $\Cat$, there is a functorial exact sequence
\begin{equation}\label{eq:Atsequence}
\begin{CD}
0@>>>\Omega_k\otimes_k X@>i_X>> \At^1_{\Cat}(X)@>\pi_X>> X@>>> 0,
\end{CD}
\end{equation}
where, as above, $\Omega_k=D^\vee_k$, and a functorial embedding
$$
\At^2_{\Cat}(X)\subset \At^1_{\Cat}\left(\At^1_{\Cat}(X)\right).
$$
We have the equality
\begin{equation}\label{eq:441}
\Sym^2_k\Omega_k\otimes_k X=\At^2_{\Cat}(X)\cap\left(\Omega_k\otimes_k\Omega_k\otimes_k X\right)\subset \At^1_{\Cat}{\left(\At^1_{\Cat}(X)\right)},
\end{equation}
and both compositions
\begin{equation}\label{eq:446}
\begin{CD}
\At^2_{\Cat}(X)@>>> \At^1_{\Cat}\left(\At^1_{\Cat}(X)\right)@>\At_{\Cat}^1(\pi_X)>>\At^1_{\Cat}(X),
\end{CD}
\end{equation}
\begin{equation}\label{eq:452}
\begin{CD}
\At^2_{\Cat}(X)@>>> \At^1_{\Cat}\left(\At^1_{\Cat}(X)\right)@>\pi_{\At^1(X)}>>\At^1_{\Cat}(X)
\end{CD}
\end{equation}
are surjective (see~\cite[Lemma~4.14, Proposition~4.18]{GGO}).

For example, $\Vect(k)$ has a canonical $D_k$-structure given by the usual Atiyah extension (see~\cite[Example~4.7]{GGO}). It can be defined either in terms of jet rings (see~\cite[\S3.6, Example~4.7]{GGO}), or in terms of differential operators as follows. Given a finite-dimensional $k$-vector space $V$, the $k$-vector space $\At^1(V)$ consists of first order $D_k$-differential operators from $V^{\vee}$ to $k$. If $\dim_k D_k =1$ and $\Omega_k=k\cdot\omega$, then we have:
$$
\At^1(V)=V\otimes_k(k\oplus k\cdot\omega),
$$
and the $k$-linear structure on it is defined as follows:
$$
a\cdot(u\otimes 1+ v\otimes\omega) =au\otimes 1+u\otimes\dd a + av\otimes \omega,\quad a\in k,\ \ u,v\in V.
$$
In this case, the morphisms in the exact sequence~\eqref{eq:Atsequence} are given by
$$
i_V(v\otimes\omega) = v\otimes\omega\quad\text{and}\quad \pi_V(u\otimes 1+v\otimes\omega)= u,\quad u,v\in V,
$$
respectively.
The above approach is dual to the one in~\cite[Definition~1]{OvchTannakian} and~\cite[pp.~1199--1200]{diffreductive}.
The $D_k$-structure on $\Vect(k)$ induces a canonical $D_k$-structure on $\Rep(G)$ for a (pro-)linear $D_k$-group $G$ (see~\cite[Example~4.8]{GGO}).

Another important example is the category $$\DMod(K,D_{K/k})$$ of $D_{K/k}$-modules over $K$, where $(K,D_K)$ is a parameterized differential field over $(k,D_k)$. The category $\DMod(K,D_{K/k})$ is $k$-linear and has a canonical $D_k$-structure (see~\cite[Theorem~5.1]{GGO}). In \cite[\S4.4]{GGO}, the authors investigate differential Tannakian categories. Any neutral $D_k$-Tannakian category with a $D_k$-fiber functor is equivalent to $\Rep(G)$ with the forgetful functor, where $G$ is a (pro-)linear $D_k$-group. Note that the category $\DMod(K,D_{K/k})$ is not necessarily a $D_k$-Tannakian category (even without the requirement of being neutral), because the category $\Vect(K,D_K)$ does not necessarily have a structure of a $D_k$-category.

\section{$D_k$-structure on objects in $D_k$-categories}\label{sec:categorical}

In this section, we define a $D_k$-structure on objects in abstract $D_k$-categories. This notion and its main property given in Proposition~\ref{prop-main} are used in Sections~{sec:LDAG} and~\ref{sec:IDE} for applications to linear $D_k$-groups and isomonodromic parameterized linear differential equations, respectively. As we will further see in Example~\ref{example-new}, the filtered-linearly closed assumption of the proposition cannot be removed.

Let $\Cat$ be a $D_k$-category over $k$, $X$ be an object in $\Cat$.

\begin{definition}\label{defin-Dkobject}
A {\it $D_k$-connection} on $X$ is a section
$$
s_X:X\to\At^1_{\Cat}(X)
$$
of $\pi_X$. A $D_k$-connection $s_X$ is a {\it $D_k$-structure} if the
image of the composition
$$
\begin{CD}
X@>s_X>> \At^1_{\Cat}(X)@>\At^1_{\Cat}(s_X)>> \At^1_{\Cat}\left(\At^1_{\Cat}(X)\right)
\end{CD}
$$
is contained in $\At^2_{\Cat}(X)$.
\end{definition}

\begin{example}\label{ex:32}
To give a $D_k$-connection on a $k$-vector space $V$ as an object in $\Cat=\Vect(k)$ is the same as to give a usual connection on $V$, that is, a map
$$
\nabla_V:V\to \Omega_k\otimes_k V
$$
that satisfies the Leibniz rule. A $k$-vector space together with a $D_k$-structure is the same as a $D_k$-module (see~\cite[Proposition~3.42]{GGO}).
\end{example}

Let us give an equivalent condition for the existence of a $D_k$-connection. For any $\partial\in D_k$, the $D_k$-category $\Cat$ has a canonical
structure of a $\partial$-category by \cite[Proposition~4.12(i)]{GGO}.
Explicitly, a calculation shows that, for each object $X$ in $\Cat$, we have
$$
\At^1_{\Cat,\partial}(X)=\At^1_{\Cat}(X)/U,\quad\text{where}\quad U := \Ker {\left(\partial\otimes\id_X :\Omega_k\otimes_kX \to X\right)}
$$
and $\At^1_{\Cat,\partial}$ is the Atiyah functor that corresponds to the $\partial$-category structure on $\Cat$. Denote the quotient morphism by
$$
\alpha_{\partial}:\At^1_{\Cat}(X)\to \At^1_{\Cat,\partial}(X).
$$
 Since $U$ is contained in $\Omega_k\otimes_k X$, the morphism $\pi_X:\At^1_{\Cat}(X)\to X$ factors through $\alpha_{\partial}$. That is, we obtain a morphism
$$
\pi_{X,\partial}:\At^1_{\Cat,\partial}(X)\to X
$$
such that $\pi_{X,\partial}\circ\alpha_{\partial}=\pi_X$. By definition, a $\partial$-connection on $X$ is a section of $\pi_{X,\partial}$.

\begin{proposition}\label{prop-connequiv}
There is a $D_k$-connection on $X$ if and only if there is a basis $\partial_1,\ldots,\partial_d$ in $D_k$ over $k$ such that for any $i$, there is a $\partial_i$-connection on $X$.
\end{proposition}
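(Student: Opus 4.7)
The forward direction ($\Rightarrow$) is essentially definitional: given a $D_k$-connection $s_X\colon X\to\At^1_{\Cat}(X)$ and any $\partial\in D_k$, the composition $\alpha_\partial\circ s_X\colon X\to\At^1_{\Cat,\partial}(X)$ is a $\partial$-connection because $\pi_{X,\partial}\circ\alpha_\partial=\pi_X$, so $\pi_{X,\partial}\circ(\alpha_\partial\circ s_X)=\id_X$. Every basis works, so this direction needs no further argument.

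For the reverse direction ($\Leftarrow$), the plan is to reconstruct $\At^1_{\Cat}(X)$ as a fiber product of its $\partial_i$-quotients. Take the basis $\partial_1,\ldots,\partial_d$ provided and let $\omega_1,\ldots,\omega_d$ be the dual basis of $\Omega_k=D_k^\vee$. This yields the decomposition $\Omega_k\otimes_kX=\bigoplus_i\omega_i\otimes X$. A direct computation from the description $U=\Ker(\partial\otimes\id_X)$ given just before the proposition shows that
$$
U_i:=\Ker(\partial_i\otimes\id_X)=\bigoplus_{j\neq i}\omega_j\otimes X,
$$
so the $U_i$ intersect to $0$, and the kernel of $\pi_{X,\partial_i}\colon\At^1_{\Cat,\partial_i}(X)\to X$ is canonically $\omega_i\otimes X$.

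Now form the fiber product
$$
P:=\At^1_{\Cat,\partial_1}(X)\times_X\cdots\times_X\At^1_{\Cat,\partial_d}(X)
$$
over the morphisms $\pi_{X,\partial_i}$. The quotient morphisms $\alpha_{\partial_i}$ assemble into a natural morphism $\varphi\colon\At^1_{\Cat}(X)\to P$ compatible with the projections to $X$. The kernel of $P\to X$ is $\bigoplus_i\Ker(\pi_{X,\partial_i})=\bigoplus_i\omega_i\otimes X$, and the restriction of $\varphi$ to $\Omega_k\otimes_kX$ is precisely the decomposition isomorphism above. The five lemma applied in the abelian category $\Cat$ then gives that $\varphi$ is an isomorphism. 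Finally, given $\partial_i$-connections $s_i\colon X\to\At^1_{\Cat,\partial_i}(X)$ for each $i$, the tuple $(s_1,\ldots,s_d)$ defines a section $X\to P$ of $P\to X$; transporting this section through $\varphi^{-1}$ yields the desired $D_k$-connection $s_X$.

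The only nontrivial step is the identification of $\varphi$ as an isomorphism, which rests on the explicit description of $\At^1_{\Cat,\partial_i}(X)$ as the quotient by $U_i$ combined with the direct-sum decomposition of $\Omega_k\otimes_kX$ provided by the dual basis. No integrability condition enters, consistent with the statement addressing connections rather than $D_k$-structures; in particular, the argument does not require the $\partial_i$ to commute.
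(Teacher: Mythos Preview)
Your proof is correct and follows essentially the same approach as the paper: both directions agree, and for the reverse implication the paper likewise assembles the $\alpha_{\partial_i}$ into a map from $\At^1_{\Cat}(X)$ to the fiber product $\At^1_{\Cat,\partial_1}(X)\times_X\cdots\times_X\At^1_{\Cat,\partial_d}(X)$, checks it is an isomorphism by comparing kernels (the paper phrases this via the isomorphism $\bigoplus_i\partial_i\colon\Omega_k\to k^{\oplus d}$ rather than via the dual basis, but this is the same computation), and then pulls back the tuple of sections. The only cosmetic difference is that you invoke the five lemma explicitly and identify $\Ker(\pi_{X,\partial_i})$ with $\omega_i\otimes X$ rather than with $X$.
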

\begin{proof}
The existence of a $D_k$-connection on $X$ implies the existence of a $\partial$-connection on $X$ for any $\partial\in D_k$ by the explicit construction of $\At^1_{\Cat,\partial}$ given above.

Now let us show the reverse implication. The morphisms $\alpha_{\partial_i}$, $1\leqslant i\leqslant d$, defined above give a morphism
$$
\alpha:\At^1_{\Cat}(X)\to Z
$$
such that $\pi\circ\alpha=\pi_X$, where
$$
Z:=\At^1_{\Cat,\partial_1}(X)\times_X\ldots\times_X \At^1_{\Cat,\partial_d}(X)\stackrel{\pi}\longrightarrow X.
$$
is the fibred product in $\Cat$. Thus, we have the following commutative diagram:
$$
\begin{CD}
0@>>>\Ker(\pi_X)@>>>\At^1_{\Cat}(X)@>\pi_X>>X@>>>0\\
@.@VVV@V\alpha VV@V\id_X VV\\
0@>>>\Ker(\pi)@>>>Z@>\pi>>X@>>>0\\
\end{CD}
$$
Since $\partial_1,\ldots,\partial_d$ is a basis of $D_k$ over $k$, the map
$$
\bigoplus_{i=1}^d\,\partial_i:\Omega_k\to k^{\oplus d}
$$
is an isomorphism. It follows that the restriction of $\alpha$ to $\Ker(\pi_X)=\Omega_k\otimes_k X$ is an isomorphism
$$
\begin{CD}
\Ker(\pi_X)@>{\oplus_i (\partial_i\otimes\id_X)}>>
\Ker(\pi)=\bigoplus\limits_{i=1}^d\Ker{\left(\pi_{X,\partial_i}\right)}=
\bigoplus\limits_{i=1}^d X.
\end{CD}
$$
Thus, $\alpha$ itself is an isomorphism.  Hence, given sections~$s_i$ of the morphisms $\pi_i$ for all $i$, $1\leqslant i\leqslant d$, we obtain a section $s_X$ of $\pi_X$.
\end{proof}

In what follows, we address the question whether the existence of a $D_k$-connection on $X$ implies the existence of a $D_k$-structure on $X$. It will be convenient to use the following notion first introduced in~\cite{Positselskii}. Recall that, for a graded associative algebra $$A^{\bullet}=\bigoplus_i A^i,$$ the commutator is defined by the formula
$$
[a,b]:=a\cdot b-(-1)^{\deg(a)\deg(b)}b\cdot a
$$
for homogenous elements $a,b\in A^{\bullet}$.

\begin{definition}\label{defin-CDGA}
A {\it CDG-structure} on a graded associative algebra
$A^{\bullet}$ over a field $F$ is a pair $(\dd,h)$, where
$$
\dd: A^i\to A^{i+1}
$$
is a collection of $F$-linear maps that satisfy the graded Leibniz rule
$$
\dd(a\cdot b)=\dd(a)\cdot b+(-1)^{\deg(a)}a\cdot \dd(b)
$$
for all homogenous elements $a,b\in A^{\bullet}$, and $h\in A^2$ is such that
$$
(\dd\circ \dd)(\cdot)=[h,\cdot\:],\quad \dd(h)=0.
$$
\end{definition}

Given a CDG-structure $(\dd,h)$ on $A^{\bullet}$ and an element $a\in A^1$, we obtain a new CDG-structure with
\begin{equation}\label{eq:583}
\dd'=\dd+[a, \cdot\:],\quad h'=h+\dd(a)+a^2.
\end{equation}
By definition, the CDG-structures $(\dd,h)$ and $(\dd',h')$ are {\it equivalent}.

\begin{example}\label{examp-CDGA}
\hspace{0cm}
\begin{enumerate}
\item\label{en:523}
The pair $(\dd,0)$ defines a CDG-structure on the graded associative algebra $\Omega^{\bullet}_k$ over $k_0$, where $\dd$ denotes the differential in the de Rham complex.
\item\label{en:526}
Let $V$ be a $k$-vector space, $\nabla_V$ be a $D_k$-connection on $V$. We obtain maps
$$
\nabla_V:\Omega^i_k\otimes_k V\to \Omega^{i+1}_k\otimes_k V,\quad
\nabla_V(\omega\otimes v):=\dd\omega\otimes v+(-1)^i\omega\wedge\nabla_V(v)
$$
and a CDG-structure $(\dd,h)$ on the graded associative algebra $\Omega_k^{\bullet}\otimes_k \End_k(V)$ over $k_0$ with
$$
\dd(a):=\left(\id_{\Omega_k}\wedge\,a\right)\circ\nabla_V-(-1)^i\nabla_V\circ a,\quad a\in \Omega_k^i\otimes_k\End_k(V)=\Hom_k(V,\Omega^i_k\otimes_k V),
$$
$$
h:=\nabla_V\circ\nabla_V\in \Omega_k^2\otimes_k\End_k(V)=\Hom_k(V,\Omega^2_k\otimes_k V).
$$
The condition $\dd(h)=0$ is classically called the second Bianchi identity. Note that $h$ vanishes if and only if the connection $\nabla_V$ is a $D_k$-structure on $V$. The natural embedding
$$
\Omega^{\bullet}_k\subset \Omega^{\bullet}_k\otimes_k\End_k(V)
$$
given by $\id_V\in \End_k(V)$ commutes with $\dd$. Thus, the notation $\dd$ in the CDG-structure on $\Omega_k^{\bullet}\otimes_k \End_k(V)$ does not lead to a contradiction.
\end{enumerate}
\end{example}

There is a notion of a morphism between differential fields $(k,D_k)\to (K,D_K)$, which generalizes $D_k$-fields over $k$ (see~\cite[Definition~3.6]{GGO}) In particular, we have a canonical $k$-linear map $\Omega_k\to \Omega_K$. Given such a morphism, one defines differential functors from $D_k$-categories over $k$ to $D_K$-categories over $K$ (see~\cite[Definition~4.9]{GGO}). For example, if $\Cat$ is a Tannakian category, then there is a faithful differential functor $\Cat\to\Vect(K)$ for a $D_k$-field $K$ over $k$.
The following result generalizes Example~\ref{examp-CDGA}\eqref{en:526}.

\begin{lemma}\label{lemma-connCDGA}
Let $s_X$ be a $D_k$-connection on $X$. Suppose that there is a morphism of differential fields $(k,D_k)\to (K,D_K)$ such that the map $\Omega_k\to \Omega_K$ is injective and a faithful differential functor $F:\Cat\to \Vect(K)$. Then the following is true:
\begin{enumerate}
\item\label{en:570}
$s_X$ defines a CDG-structure $(\dd,h)$ on the graded associative algebra $\Omega^{\bullet}_k\otimes_k\End_{\Cat}(X)$ over $k_0$;
\item\label{en:573}
$h$ vanishes if and only if $s_X$ is a $D_k$-structure;
\item\label{en:575}
given a CDG-structure $(\dd',h')$ on $\Omega^{\bullet}_k\otimes_k\End_{\Cat}(X)$, there is a $D_k$-connection $s'_X$ on $X$ such that $(\dd',h')$ corresponds to $s'_X$ if and only if $(\dd',h')$ is equivalent to $(\dd,h)$.

\end{enumerate}
\end{lemma}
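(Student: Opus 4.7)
The plan is to use the faithful differential functor $F$ to reduce everything to the vector-space construction of Example~\ref{examp-CDGA}\eqref{en:526}, then descend the resulting CDG-structure from $\Omega^\bullet_K \otimes_K \End_K(V)$ (where $V := F(X)$) to the subalgebra
\[
\Omega^\bullet_k \otimes_k \End_\Cat(X) \hookrightarrow \Omega^\bullet_K \otimes_K \End_K(V),
\]
which is an inclusion of graded $k_0$-algebras by the faithfulness and $k$-linearity of $F$ together with the injection $\Omega_k \hookrightarrow \Omega_K$.

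Since $F$ is a differential functor, $F(s_X)$ is a connection $\nabla_V$ on $V$ in the sense of Example~\ref{ex:32}, and Example~\ref{examp-CDGA}\eqref{en:526} produces a CDG-structure $(\tilde\dd, \tilde h)$ on $\Omega^\bullet_K \otimes_K \End_K(V)$ over $K_0$. For part~\eqref{en:570}, I would give an intrinsic description on the subalgebra matching $(\tilde\dd, \tilde h)$ after $F$. In degree zero, for $a \in \End_\Cat(X)$, naturality of $\pi_X$ yields $\pi_X \circ \At^1_\Cat(a) \circ s_X = a = \pi_X \circ s_X \circ a$, so their difference factors through $\Ker(\pi_X) = \Omega_k \otimes_k X$, producing
\[
\dd(a) := \At^1_\Cat(a) \circ s_X - s_X \circ a \in \Omega_k \otimes_k \End_\Cat(X),
\]
which coincides with $\tilde\dd(a)$ after applying $F$. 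On degree-$i$ elements, the graded Leibniz rule, combined with the fact that $\tilde\dd$ restricts to the de Rham differential on $\Omega^\bullet_k$, forces $\tilde\dd$ to preserve $\Omega^\bullet_k \otimes_k \End_\Cat(X)$. For $\tilde h$, using the composition $\At^1_\Cat(s_X) \circ s_X \colon X \to \At^1_\Cat(\At^1_\Cat(X))$ together with~\eqref{eq:441}--\eqref{eq:452}, one extracts an intrinsic element $h \in \Omega^2_k \otimes_k \End_\Cat(X)$ whose image under $F$ is $\tilde h$. The CDG-axioms $\dd \circ \dd = [h, \cdot]$, $\dd(h) = 0$, and the graded Leibniz rule then hold on the subalgebra because they hold in the ambient algebra by Example~\ref{examp-CDGA}\eqref{en:526}.

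Part~\eqref{en:573} is then immediate: $h = 0$ iff $\tilde h = \nabla_V \circ \nabla_V = 0$; by~\eqref{eq:441} and the faithfulness of $F$, this is equivalent to the image of $\At^1_\Cat(s_X) \circ s_X$ being contained in $\At^2_\Cat(X)$, i.e., to $s_X$ being a $D_k$-structure on $X$. For part~\eqref{en:575}, the set of $D_k$-connections on $X$ is an $\Omega_k \otimes_k \End_\Cat(X)$-torsor: any two sections of $\pi_X$ differ by a morphism $X \to \Ker(\pi_X) = \Omega_k \otimes_k X$, yielding an element $b \in \Omega_k \otimes_k \End_\Cat(X)$. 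Transporting this difference through $F$ and using Example~\ref{examp-CDGA}\eqref{en:526} to compute how $\nabla_V \circ \nabla_V$ and the degree-zero commutator change under $\nabla_V \mapsto \nabla_V + b$ recovers exactly the transformation~\eqref{eq:583}. Hence the CDG-structures arising from $D_k$-connections on $X$ form a single equivalence class, which proves both directions of~\eqref{en:575}.

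The main obstacle I anticipate lies in part~\eqref{en:570}: verifying that $\tilde\dd$ preserves the subalgebra and that $\tilde h$ already lies inside it. This is where the intrinsic categorical constructions using the Atiyah functors are essential, as is the fact that $F$ is a \emph{differential} functor (so it commutes with $\At^1_\Cat$ up to the base change $(k, D_k) \to (K, D_K)$); without this the CDG-structure is only a priori defined on the larger $K_0$-algebra.
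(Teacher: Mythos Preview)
Your proposal is correct and follows essentially the same route as the paper: define $\dd$ and $h$ intrinsically in $\Cat$ via the Atiyah functors, then use the faithful differential functor $F$ and the injectivity of $\Omega^\bullet_k\otimes_k\End_\Cat(X)\hookrightarrow\Omega^\bullet_K\otimes_K\End_K(F(X))$ to import the CDG identities from Example~\ref{examp-CDGA}\eqref{en:526}; part~\eqref{en:575} is handled identically via the torsor structure on connections. The one place where the paper is more explicit than your sketch is the intrinsic construction of $h$: rather than invoking \eqref{eq:441}--\eqref{eq:452} directly, the paper first checks (by a short diagram chase using functoriality of $\pi$) that $\At^1_\Cat(s_X)\circ s_X$ factors through the subobject $Y:=\Ker\bigl(\At^1_\Cat(\pi_X)-\pi_{\At^1(X)}\bigr)$, then uses the inclusion $\At^2_\Cat(X)\subset Y$ (which is where the faithfulness hypothesis enters, via \cite[Remark~4.21(iii)]{GGO}) together with the identification $Y/\At^2_\Cat(X)\cong\Omega^2_k\otimes_k X$ to read off $h$ --- exactly the ``obstacle'' you flagged. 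Note also that your sign in $\dd(a)$ is opposite to the paper's convention $\nabla(a)=s_X\circ a-\At^1_\Cat(a)\circ s_X$, which is harmless.
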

\begin{proof}
First let us show~\eqref{en:570}. The section $s_X$ defines a map
$$
\nabla:\End_{\Cat}(X)\to\Omega_k\otimes_k\End_{\Cat}(X),\quad \nabla(a):=s_X\circ a-\At^1_{\Cat}(a)\circ s_X.
$$
In other words, $\nabla(a)$ measures non-commutativity of the diagram
$$
\begin{CD}
X@>s_X>>\At^1_{\Cat}(X)\\
@Va VV @V\At^1(a) VV\\
X@>s_X>>\At^1_{\Cat}(X).
\end{CD}
$$
One checks that $\nabla$ is a $D_k$-connection on the $k$-algebra $\End_{\Cat}(X)$. By the (graded) Leibniz rule, this extends uniquely to a collection of $k_0$-linear maps
$$
\dd:\Omega^i_k\otimes_k\End_{\Cat}(X)\to \Omega^{i+1}_k\otimes_k\End_{\Cat}(X).
$$
Next, let us define $h\in \Omega^2_k\otimes_k\End_{\Cat}(X)$. Put
$$
Y:=\Ker\big(\At^1_{\Cat}(\pi_X)-\pi_{\At^1(X)}:
\At^1_{\Cat}\left(\At^1_{\Cat}(X)\right)\to\At^1_{\Cat}(X)\big).
$$
We claim that the image of the composition
$$
\At^1_{\Cat}(s_X)\circ s_X:X\to\At^1_{\Cat}\left(\At^1_{\Cat}(X)\right)
$$
is contained in $Y$. To prove this, recall that $\pi_X\circ s_X=\id_X$. Since $\At^1_{\Cat}$ is a functor, we have
$$
\At^1_{\Cat}(\pi_X)\circ \At^1_{\Cat}(s_X)=\id_{\At^1(X)},
$$
whence
\begin{equation}\label{eq:694}
\At^1_{\Cat}(\pi_X)\circ \At^1_{\Cat}(s_X)\circ s_X=s_X.
\end{equation}
Since the morphism $\At^1_{\Cat}(X)\stackrel{\pi_X}\longrightarrow X$ is functorial in $X$, the following diagram commutes:
$$
\begin{CD}
\At^1_{\Cat}(X)@>\At^1(s_X)>>\At^1_{\Cat}\left(\At^1_{\Cat}(X)\right)\\
@V\pi_X VV @V\pi_{\At^1(X)}VV\\
X@>s_X >>\At^1_{\Cat}(X).
\end{CD}
$$
Hence, we have
\begin{equation}\label{eq:707}
\pi_{\At^1(X)}\circ \At^1_{\Cat}(s_X)\circ s_X=s_X\circ\pi_X\circ s_X=s_X.
\end{equation}
Combining~\eqref{eq:694} and~\eqref{eq:707}, we obtain the following equality of morphisms from $X$ to $\At^1_{\Cat}\left(\At^1_{\Cat}(X)\right)$:
$$
\At^1_{\Cat}(\pi_X)\circ \At^1_{\Cat}(s_X)\circ s_X=\pi_{\At^1(X)}\circ \At^1_{\Cat}(s_X)\circ s_X.
$$
Thus, the image of $\At^1_{\Cat}(s_X)\circ s_X$ is contained in $Y$.

Since $F:\Cat\to \Vect(K)$ is faithful, we have that $\At^2_{\Cat}(X)\subset Y$ (see~\cite[Remark~4.21(iii)]{GGO}). By the construction of $Y$, we have the following exact sequence
$$
\begin{CD}
0@>>>\Omega_k\otimes_k\Omega_k\otimes_k X@>>>Y@>\At^1(\pi_X)>>\At^1_{\Cat}(X)@>>>0.
\end{CD}
$$
By~\eqref{eq:441} and~\eqref{eq:446} (see \S\ref{sec:prelnot}), we obtain an isomorphism
$$
\Omega^2_k\otimes_k X\stackrel{\sim}\longrightarrow Y/\At^2_{\Cat}(X).
$$
Put
$$
h\in\Omega^2_k\otimes_k\End_{\Cat}(X)=\Hom_{\Cat}(X,\Omega^2_k\otimes_k X)
$$
to be the composition
$$
\begin{CD}
X@>\At^1(s_X)\circ s_X>> Y@>>> Y/\At^2_{\Cat}(X)@>\sim>>\Omega^2_k\otimes_k X.
\end{CD}
$$
It remains to show the identities
\begin{equation}\label{eq:1709}
\dd\circ \dd=[h,\cdot\:],\quad \dd(h)=0.
\end{equation}

One can show that, if $\Cat$ is the category of vector spaces over a differential field, then $\dd$ and $h$ constructed as above coincide with those defined in Example~\ref{examp-CDGA}\eqref{en:526}.  Further, the constructions of $\dd$ and $h$ commute with differential functors. More explicitly, consider the differential functor $F:\Cat\to\Vect(K)$. The morphism of differential fields $(k,D_k)\to (K,D_K)$ defines a homomorphism of graded algebras
$\Omega^{\bullet}_k\to\Omega_K^{\bullet}$, which commutes with the de Rham differential $\dd$ (see~\cite[Definition~3.6]{GGO}). The functor $F$ induces a homomorphism of graded algebras
$$
\alpha:\Omega^{\bullet}_k\otimes_k\End_{\Cat}(X)\to \Omega^{\bullet}_K\otimes_K\End_K\left(F(X)\right).
$$
The connection $s_X$ on $X$ defines a $D_K$-connection on the $K$-vector space $F(X)$ such that $\alpha$ commutes with $\dd$ and preserves $h$. Since $F$ is faithful and the map $\Omega_k\to\Omega_K$ is injective, $\alpha$ is injective. Thus, we obtain~\eqref{eq:1709} by Example~\ref{examp-CDGA}\eqref{en:526} applied to $K$-vector spaces. This finishes the proof of~\eqref{en:570}.

Further,~\eqref{en:573} follows from the construction of $h$. To prove~\eqref{en:575}, note that any other $D_k$-connection on $X$ is given~by
\begin{equation}\label{eq:allconnections}
s'_X=s_X+a,
\end{equation}
where
$$
a\in \Omega^1_k\otimes_k\End_{\Cat}(X)
$$
is an arbitrary element. We need to show that the corresponding CDG-structure $(\dd',h')$ on $\Omega^{\bullet}_k\otimes_k\End_{\Cat}(X)$  satisfies~\eqref{eq:583}.
As above, by the injectivity of the algebra homomorphism $\alpha$, it is enough to consider the case $\Cat=\Vect(K)$, in which the required follows from Example~\ref{examp-CDGA}\eqref{en:526}.
\end{proof}

It follows from Lemma~\ref{lemma-connCDGA} that if $\dim_k(D_k)=1$ and $\Cat$ satisfies the condition from Lemma~\ref{lemma-connCDGA}, then any $D_k$-connection $s_X$ on an object $X$ in $\Cat$ is a $D_k$-structure on $X$.

One can give a different definition of a $D_k$-category so that Lemma~\ref{lemma-connCDGA} holds for any $D_k$-category in this new sense. Namely, one should require the compatibility condition from~\cite[Remark~4.21(i)]{GGO} and also the pentagon condition for $\Psi$ in notation from there. The latter condition involves consideration of the third jet-ring $P^3_k$.

\begin{definition}\label{defin-good}
We say that a differential field $(k,D_k)$ is {\it filtered-linearly closed} if there is a sequence of $k$-vector subspaces closed under the Lie bracket
$$
0=D_0\subset D_{1}\subset\ldots\subset D_{d-1}\subset D_{d}=D_k
$$
such that for any $i$, $0\leqslant i\leqslant d-1$, we have
$$
\dim_k\left(D_{i+1}/D_{i}\right)=1
$$
and $k$ is linearly $D_i$-closed.
\end{definition}

Note that, in Definition~\ref{defin-good}, we do not require that $k$ be linearly $D_k$-closed, that is, a filtered-linearly closed field is not necessarily linearly closed.

\begin{example}\label{examp-good}
\hspace{0cm}
\begin{enumerate}
\item
If $\dim_k(D_k)=1$, then $(k,D_k)$ is filtered-linearly closed.
\item
If $k$ is $D_k$-closed, then $(k,D_k)$ is filtered-linearly closed. Indeed, since $k$ is
$D_k$-closed, the natural map $D_k\to \Der(k,k)$ is injective. By
\cite[p.~12, Proposition~6]{KolDAG}, there is a commuting basis
$\partial_1,\ldots,\partial_d$ in $D_k$ over $k$ and we put
$$
D_i:=\Span_k\langle\partial_1,\ldots,\partial_i\rangle.
$$
Again, since $k$ is $D_k$-closed, we see that $k$ is linearly $D_i$-closed.
\end{enumerate}
\end{example}

\begin{lemma}\label{lemma-CDGA}
Let $A$ be a finite-dimensional associative algebra over $k$. Suppose that $(k,D_k)$ is filtered-linearly closed. Then any CDG-structure on $\Omega_k^{\bullet}\otimes_k A$ is equivalent to a CDG-structure with $h=0$.
\end{lemma}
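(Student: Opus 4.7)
The plan is to proceed by induction on $d := \dim_k(D_k)$ using the filtration $0 = D_0 \subset D_1 \subset \cdots \subset D_d = D_k$ provided by Definition~\ref{defin-good}. The base case $d \leqslant 1$ is immediate since $\Omega^2_k = 0$ forces $h = 0$.

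For the inductive step, I would fix $\partial \in D_k$ complementing $D_{d-1}$ and let $\omega \in \Omega_k$ be the dual element with $\omega|_{D_{d-1}} = 0$ and $\omega(\partial) = 1$, yielding a splitting $\Omega_k = \Omega_{d-1} \oplus k\omega$. Since $D_{d-1}$ is a Lie subalgebra of $D_k$, the Maurer--Cartan formula $(\dd\omega)(\partial_1,\partial_2) = \partial_1\omega(\partial_2) - \partial_2\omega(\partial_1) - \omega([\partial_1,\partial_2])$ shows that $\dd\omega$ has zero $\Omega^2_{d-1}$-component, so $\dd\omega = \alpha \wedge \omega$ for a unique $\alpha \in \Omega^1_{d-1}$. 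Consequently, the two-sided ideal of $\Omega^{\bullet}_k\otimes_k A$ generated by $\omega$ is $\dd$-stable, and the quotient projection $\pi: \Omega^{\bullet}_k \otimes_k A \to \Omega^{\bullet}_{d-1} \otimes_k A$ induces a CDG-structure $(\bar{\dd}, \pi(h))$ on the quotient. Since $(k, D_{d-1})$ is itself filtered-linearly closed, the inductive hypothesis produces $\bar{a} \in \Omega^1_{d-1} \otimes_k A$ trivializing $\pi(h)$. Lifting $\bar{a}$ to $a \in \Omega^1_k \otimes_k A$ via the splitting and passing to the equivalent CDG-structure from~\eqref{eq:583}, we may assume $\pi(h) = 0$, so $h = \eta \wedge \omega$ for some $\eta \in \Omega^1_{d-1} \otimes_k A$.

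The remaining task is to find $b \in A$ such that $a := b\omega$ satisfies $h + \dd(a) + a^2 = 0$. Using $(b\omega)^2 = 0$ together with $\dd\omega = \alpha \wedge \omega$, this collapses to the linear system $\bar{\partial}(b) + \alpha(\partial)\cdot b + \eta(\partial) = 0$ for every $\partial \in D_{d-1}$, where $\bar{\partial}: A \to A$ is the derivation obtained from the $\Omega^1_{d-1}$-component of $\dd|_A$. These conditions are precisely the requirement that $(b, 1) \in W := A \oplus k$ be horizontal for the $D_{d-1}$-module structure that sends $(b, \lambda)$ to $(\bar{\partial}(b) + \alpha(\partial)\cdot b + \lambda\cdot\eta(\partial),\, \partial(\lambda))$.

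The main obstacle will be verifying the flatness of this $D_{d-1}$-action on $W$, which enables the invocation of linear $D_{d-1}$-closedness. Flatness is obtained by expanding the Bianchi identity $\dd(h) = 0$ in the decomposition $\Omega^3_k = \Omega^3_{d-1} \oplus (\Omega^2_{d-1} \wedge \omega)$: it yields $\bar{\dd}\eta = \eta \wedge \alpha$ in $\Omega^2_{d-1} \otimes_k A$, which, combined with $\bar{\dd}^2 = 0$ (inherited from $\pi(h) = 0$) and the closedness of $\alpha$ in $\Omega^{\bullet}_{d-1}$ (a consequence of $\dd^2\omega = 0$), matches exactly the commutator identity for $W$. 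Since $W$ is a finite-dimensional flat $D_{d-1}$-module and $k$ is linearly $D_{d-1}$-closed, $W$ is trivial, and the $D_{d-1}$-equivariant surjection $W \to k$ (projection to the second factor) produces the desired horizontal element $(b, 1)$, completing the proof.
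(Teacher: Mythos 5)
Your proposal is correct and follows essentially the same route as the paper: induction along the filtration of Definition~\ref{defin-good}, passing to the quotient CDG-algebra $\Omega^{\bullet}_{d-1}\otimes_k A$ to kill the curvature modulo the rank-one piece spanned by $\omega$, and then using linear $D_{d-1}$-closedness (via the Bianchi identity) to solve the remaining linear problem. The only difference is presentational: the paper works with the $\dd$-ideal $I^{\bullet}$ and the induced flat $D_{d-1}$-module $V=\Omega\otimes_k A$ satisfying $\nabla_V(h)=0$, whereas you unwind the same data in coordinates via the splitting $\Omega_k=\Omega_{d-1}\oplus k\omega$ and make explicit the auxiliary extension module $W=A\oplus k$ that the paper leaves implicit when invoking linear closedness.
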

\begin{proof}
We use induction on $d:=\dim_k(D_k)$. The case $d=1$ is automatic. Let us make the inductive step from $d-1$ to $d$. Consider the differential fields $(k,D_{i})$ from Definition~\ref{defin-good} and put $\Omega^{\bullet}_{i}$ to be the corresponding de Rham complexes. Also, put
$$
\Omega:=\Ker\left(\Omega_d\to\Omega_{d-1}\right).
$$
Since $D_{d-1}$ is a Lie subring in $D_d$, we have a morphism of differential fields $(k,D_d)\to(k,D_{d-1})$ (see~\cite[Definition~3.6]{GGO}). Thus, we obtain a morphism of graded associative algebras
$$
\Omega_d^{\bullet}\to \Omega_{d-1}^{\bullet},
$$
which commutes with the de Rham differential $\dd$ and whose kernel is the ideal generated by $\Omega$. Thus, the ideal generated by $\Omega$ in $\Omega^{\bullet}_d$ is a $\dd$-ideal.
Further, we have the morphism of graded associative algebras
$$
\Omega_d^{\bullet}\otimes_k A\to \Omega_{d-1}^{\bullet}\otimes_k A,
$$
whose kernel $I^{\bullet}$ is the graded ideal generated by $\Omega$ in $\Omega_d^{\bullet}\otimes_k A$. Since $\dd$ from the CDG-structure on $\Omega_d^{\bullet}\otimes_k A$ satisfies the graded Leibnit rule and the natural homomorphism $\Omega_d^{\bullet}\to \Omega^{\bullet}_d\otimes_k A$ commutes with $\dd$, we deduce that $I^{\bullet}$ is also a $\dd$-ideal. Consequently, $\dd$ induces a map $\dd'$ on the graded associative algebra $\Omega^{\bullet}_{d-1}\otimes_k A$. It follows that this defines a CDG-structure $(\dd',h')$ on $\Omega_{d-1}^{\bullet}\otimes_k A$ with $h'$ being the image of~$h$ under the natural map
$$
\Omega^2_d\otimes_k A\to \Omega^2_{d-1}\otimes_k A.
$$
By the inductive hypothesis, we may assume that $h'=0$, whence $h\in I^2$, where $I^2$ is the second degree part of $I^{\bullet}$.

Put $V:=\Omega\otimes_k A$. Since $\dim_k(\Omega)=1$, we have that
$$
I^i=\Omega^{i-1}_{d-1}\otimes_k V,\,\, i\geqslant 1,\quad\text{and}\quad I^{\bullet}\cdot I^{\bullet}=0.
$$
Since $h\in I^2$, we see that the composition $$\dd\circ \dd=[h,\cdot\:]$$ vanishes on $I^{\bullet}$. We obtain a $(D_{d-1})$-module structure on the finite-dimensional $k$-vector space $V$ with $\nabla_V$ being the map $$\dd:I^1\to I^2.$$ Moreover, the element
$$
h\in \Omega^1_{d-1}\otimes_k V
$$
satisfies $\nabla_V(h)=0$ by the second Bianchi identity (see~Example~\ref{examp-CDGA}\eqref{en:526}).

Since $k$ is linearly $(D_{d-1})$-closed, we see that there is $a\in V$ such that $$\nabla_V(a)=-h,$$ or, equivalently, there is $a\in I^1$ with $\dd(a)=-h$. Since $a\cdot a=0$, the CDG-structure $$(\dd+[a,\cdot],h+\dd(a)+a\cdot a)$$ satisfies the required condition.
\end{proof}

Combining Lemma~\ref{lemma-connCDGA} and Lemma~\ref{lemma-CDGA}, we obtain the following result, which is used for applications to linear $D_k$-groups and isomonodromic parameterized linear differential equations in Sections~\ref{sec:LDAG} and~\ref{sec:IDE}, respectively.

\begin{proposition}\label{prop-main}
Suppose that $(k,D_k)$ is filtered-linearly closed and there is a morphism of differential fields $(k,D_k)\to (K,D_K)$ such that the map $\Omega_k\to \Omega_K$ is injective together with a faithful differential functor $\Cat\to \Vect(K)$. Then there is a $D_k$-connection on an object $X$ in $\Cat$ if and only if there is a $D_k$-structure on $X$.
\end{proposition}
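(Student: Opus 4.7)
The plan is to combine the two preceding lemmas in a direct fashion. The \emph{if} direction is tautological, since every $D_k$-structure is a fortiori a $D_k$-connection. So assume $s_X$ is a $D_k$-connection on $X$, and the task is to produce a (possibly different) $D_k$-connection $s'_X$ on $X$ that is actually a $D_k$-structure.

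First, invoke Lemma~\ref{lemma-connCDGA}(1), which uses the hypothesis of a faithful differential functor $F:\Cat\to\Vect(K)$: from $s_X$ one extracts a CDG-structure $(\dd,h)$ on the graded associative $k_0$-algebra $\Omega^{\bullet}_k\otimes_k A$, where $A:=\End_{\Cat}(X)$. By Lemma~\ref{lemma-connCDGA}(2), $s_X$ is already a $D_k$-structure precisely when the curvature $h\in \Omega^2_k\otimes_k A$ vanishes; in general it does not, and we must correct it by a gauge transformation.

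Next, invoke Lemma~\ref{lemma-CDGA}, which uses the filtered-linear closedness hypothesis on $(k,D_k)$: there exists $a\in \Omega^1_k\otimes_k A$ such that the CDG-structure $(\dd',h')$ obtained from $(\dd,h)$ by the formula~\eqref{eq:583} satisfies $h'=0$. By Lemma~\ref{lemma-connCDGA}(3), the CDG-structure $(\dd',h')$ corresponds to some $D_k$-connection $s'_X$ on $X$. Since the new curvature vanishes, Lemma~\ref{lemma-connCDGA}(2) applied to $s'_X$ now yields that $s'_X$ is a $D_k$-structure, finishing the argument.

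Conceptually, the main work is encapsulated by Lemma~\ref{lemma-CDGA}: passing from a $D_k$-connection to a $D_k$-structure amounts to trivializing the curvature $2$-form $h$ by choosing $a$ with $\dd(a)+a^2=-h$, and one does this step-by-step along the filtration $D_0\subset D_1\subset\ldots\subset D_d=D_k$. At each stage one uses the linear $D_i$-closedness of $k$ together with the second Bianchi identity $\dd(h)=0$ to solve the relevant linear equation modulo the previous step.
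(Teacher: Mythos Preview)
Your proof is correct and follows exactly the route the paper takes: the proposition is obtained by combining Lemma~\ref{lemma-connCDGA} and Lemma~\ref{lemma-CDGA} precisely as you describe. One small point worth making explicit is that Lemma~\ref{lemma-CDGA} requires $A=\End_{\Cat}(X)$ to be finite-dimensional over $k$; this holds here because the faithful differential functor $\Cat\to\Vect(K)$ forces $\Cat$ to be finite (as the paper remarks just after the proposition).
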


Below in Example~\ref{example-new}, we show that Proposition~\ref{prop-main} is not true over an arbitrary field $(k,D_k)$. The category $\Cat$ in this example is $\Rep(G)$ for a linear $D_k$-group $G$.

Suppose that $\Cat$ is {\it finite}, that is, all Hom-spaces in $\Cat$ are finite-dimensional
over $k$ and all objects have finite length (see \cite{Stalder}).
For example, if $\Cat$ satisfies the condition from Lemma~\ref{lemma-connCDGA}, then it is finite. Let $l$ be a $D_k$-field over $k$. Recall from~\cite{Stalder} that there is an abelian $l$-linear tensor category $l\otimes_k\Cat$, called {\it extension of scalars category}, together with an exact $k$-linear tensor functor
$$
l\otimes_k-:\Cat\to l\otimes_k\Cat.
$$
For short, put $$\Dop:=l\otimes_k\Cat\quad \text{and}\text Y:=l\otimes_k X.$$
By~\cite[Proposition~4.12(i)]{GGO}, there is a canonical $D_l$-structure on $\Dop$ with
$$
\At^1_{\Dop}(Y)\cong l\otimes_k\At_{\Cat}^1(X).
$$
Besides, $\Dop$ is a (not full) subcategory in the category $\Ind(\Cat)$ of ind-objects in $\Cat$ and there is a canonical morphism $X\to Y$ in $\Ind(\Cat)$. For example, if $\Cat=\Vectf(k)$ is the category of finite-dimensional $k$-vector spaces, then $\Dop=\Vectf(l)$ and $l\otimes_k-$ is the usual extension of scalars functor.

\begin{lemma}\label{lemma-main}
In the above notation and assumptions, given a $D_k$-field $l$ over $k$, there is a $D_k$-connection on $X$ in~$\Cat$ if and only if there is a $D_l$-connection on $Y$ in $\Dop$.
\end{lemma}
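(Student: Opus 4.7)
The plan is to handle the two directions of the equivalence separately and reduce the non-tautological direction to faithful flatness of the field extension $l/k$.

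For the forward direction, given a $D_k$-connection $s_X : X \to \At^1_\Cat(X)$ in $\Cat$, I will simply apply the exact $k$-linear tensor functor $l \otimes_k -$ and combine the result with the canonical isomorphism $\At^1_\Dop(Y) \cong l \otimes_k \At^1_\Cat(X)$ recalled immediately before the lemma. This isomorphism is compatible with the projection~$\pi$, so $l \otimes_k s_X$ is automatically a section of $\pi_Y$. This direction should be essentially tautological.

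For the converse, the central input is the Hom-space identification provided by Stalder's extension of scalars construction: for objects $X_1, X_2$ in $\Cat$,
$$
\Hom_\Dop(l \otimes_k X_1,\, l \otimes_k X_2) \;\cong\; l \otimes_k \Hom_\Cat(X_1, X_2).
$$
Applied to the pairs $(X, \At^1_\Cat(X))$ and $(X, X)$, this identifies the canonical $k$-linear map
$$
\phi : \Hom_\Cat(X, \At^1_\Cat(X)) \to \End_\Cat(X),\qquad s \mapsto \pi_X \circ s,
$$
with its base change $\id_l \otimes_k \phi$, and identifies the set of $D_k$-connections (resp.\ $D_l$-connections) with $\phi^{-1}(\id_X)$ (resp.\ $(\id_l \otimes_k \phi)^{-1}(1 \otimes \id_X)$).

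The proof then finishes by faithful flatness. Let $N := \End_\Cat(X)/\mathrm{im}(\phi)$, viewed as a $k$-vector space. By flatness of $l/k$, the cokernel of $\id_l \otimes_k \phi$ is $l \otimes_k N$, so the assumption that $1 \otimes \id_X$ lies in the image of $\id_l \otimes_k \phi$ says exactly that the class of $\id_X$ in $N$ vanishes in $l \otimes_k N$. Since $l$ is a field extension of the field $k$, it is free and hence faithfully flat as a $k$-module, forcing the class of $\id_X$ in $N$ to vanish already, which produces the desired $D_k$-connection on $X$. The main subtlety to verify is the Hom-space identification above, which should follow from the construction of $l \otimes_k \Cat$ under the standing finiteness assumption on $\Cat$; once that is in hand, the rest of the argument is a short formal manipulation.
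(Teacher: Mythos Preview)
Your argument is correct, but it takes a different route from the paper's. The paper handles the converse direction by choosing a $k$-linear retraction $\lambda:l\to k$ of the inclusion $k\hookrightarrow l$ and then writing down the section of $\pi_X$ explicitly as the composition (in $\Ind(\Cat)$)
\[
X\longrightarrow Y\xrightarrow{\,s_Y\,}\At^1_{\Dop}(Y)\cong l\otimes_k\At^1_{\Cat}(X)\xrightarrow{\,\lambda\otimes\id\,}\At^1_{\Cat}(X).
\]
This is essentially the concrete witness to the faithful-flatness statement you invoke: the splitting $\lambda$ is exactly what makes the map $N\to l\otimes_k N$ injective, and the paper simply applies it pointwise rather than passing through a cokernel. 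The payoff of the paper's version is an explicit formula---as remarked right after the lemma, for $\Cat=\Vectf(k)$ one obtains the connection matrices on $X$ by applying $\lambda$ entrywise to those on $Y$---whereas your version avoids working in $\Ind(\Cat)$ and instead packages everything into the Hom-space identification $\Hom_{\Dop}(l\otimes_k X_1,l\otimes_k X_2)\cong l\otimes_k\Hom_{\Cat}(X_1,X_2)$ from Stalder's construction. That identification is indeed available under the standing finiteness hypothesis on $\Cat$, and once granted your cokernel argument goes through cleanly; just be aware that checking it (and its compatibility with composition, so that post-composition with $\pi_Y$ really corresponds to $\id_l\otimes\phi$) is where the actual content of your proof lies.
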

\begin{proof}
Applying the functor $l\otimes_k-$, we see that a $D_k$-connection on $X$ leads to a $D_l$-connection on $Y$. Conversely, assume that there is a $D_l$-connection $s_Y$ on $Y$.
Choose a $k$-linear map $\lambda:l\to
k$ such that the composition $$k\to
l\stackrel{\lambda}\longrightarrow k$$ is the identity. Then the composition in the category $\Ind(\Cat)$
$$
\begin{CD}
X@>>> Y@>s_Y>> \At^1_{\Dop}(Y)@>\sim>>l\otimes_k \At^1_{\Cat}(X)@>\lambda\otimes\id_{\At^1(X)}>>\At^1_{\Cat}(X)
\end{CD}
$$
defines a $D_k$-connection on $X$ in $\Cat$.
\end{proof}

In general, the $D_k$-connection on $X$ constructed in the proof of Lemma~\ref{lemma-main} can be not a $D_k$-structure. If $\Cat=\Vectf(k)$, then the connection matrices for $X$ are obtained by applying $\lambda$ to the connection matrices for $Y$. Combining Proposition~\ref{prop-main} and Lemma~\ref{lemma-main}, we obtain the following result.

\begin{proposition}\label{prop-descent}
Let $l$ be a $D_k$-field over $k$. Suppose that $(k,D_k)$ is filtered-linearly closed and there is a morphism of differential fields $(k,D_k)\to (K,D_K)$ together with a faithful differential functor $\Cat\to \Vect(K)$. Then there is a $D_k$-structure on $X$ in $\Cat$ if and only if there is a $D_l$-structure on $l\otimes_k X$ in $l\otimes_k\Cat$.
\end{proposition}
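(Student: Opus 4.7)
The plan is to chain the two main results of this section: Lemma~\ref{lemma-main} for descending/ascending connections under extension of scalars, and Proposition~\ref{prop-main} for upgrading a connection to a structure. No new CDG-algebra or extension-of-scalars input is required; the proposition is essentially a formal consequence of what has already been established.

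The forward direction is automatic. The extension-of-scalars functor $l\otimes_k -:\Cat\to l\otimes_k\Cat$ is a differential functor: it sends the Atiyah short exact sequence of $X$ to that of $Y:=l\otimes_k X$ and carries $\At^2_{\Cat}(X)$ into $\At^2_{l\otimes_k\Cat}(Y)\cong l\otimes_k\At^2_{\Cat}(X)$. Consequently, if $s_X:X\to\At^1_{\Cat}(X)$ is a $D_k$-structure, then $\id_l\otimes s_X$ is a section of $\pi_Y$ whose image under $\At^1_{l\otimes_k\Cat}(\id_l\otimes s_X)$ lies in $\At^2_{l\otimes_k\Cat}(Y)$, i.e.\ a $D_l$-structure on $Y$.

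For the reverse direction, suppose that $Y=l\otimes_k X$ carries a $D_l$-structure $s_Y$. In particular $s_Y$ is a $D_l$-connection on $Y$, so Lemma~\ref{lemma-main} produces a $D_k$-connection on $X$ in $\Cat$. Now invoke Proposition~\ref{prop-main}: the hypotheses of that proposition coincide with the hypotheses of the present statement, namely that $(k,D_k)$ is filtered-linearly closed and that there is a morphism of differential fields $(k,D_k)\to(K,D_K)$ together with a faithful differential functor $\Cat\to\Vect(K)$. Applying it turns the $D_k$-connection on $X$ into a $D_k$-structure on $X$.

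The main obstacle has already been overcome in the previous results: the genuinely delicate input, namely the vanishing of the curvature obstruction $h$ via the inductive CDG-calculation under the filtered-linearly closed hypothesis, is packaged inside Proposition~\ref{prop-main}. The delicate point left here is merely to notice that Lemma~\ref{lemma-main} produces a $D_k$-\emph{connection}, not yet a structure (indeed, as remarked after the lemma, the connection built via the splitting $\lambda:l\to k$ need not be a structure even when $s_Y$ is); it is precisely Proposition~\ref{prop-main} that removes this discrepancy.
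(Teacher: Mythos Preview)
Your proof is correct and follows exactly the route the paper indicates: the paper's entire proof is the sentence ``Combining Proposition~\ref{prop-main} and Lemma~\ref{lemma-main}, we obtain the following result,'' and you have unpacked this combination accurately, including the observation that Lemma~\ref{lemma-main} only yields a connection and that Proposition~\ref{prop-main} is what closes the gap.
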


\section{Linear differential algebraic groups and conjugation}\label{sec:LDAG}

In this section, we show how  Proposition~\ref{prop-main} can be applied to linear differential algebraic groups. The main results here are in Theorem~\ref{theor-conjug} and Theorem~\ref{thm:redsemisimple}. The behavior of conjugation under extensions of scalars is illustrated in Section~\ref{sec:nonconst}. In particular, Example~\ref{example-PV} shows that the assumption on the ground field made in Theorem~\ref{theor-conjug} cannot be relaxed. Also, Example~\ref{example-new} demonstrates that Proposition~\ref{prop-main} is not true over an arbitrary differential field, and will be further used in Example~\ref{examp-isomon} to justify the need in the filtered-linearly closed assumption in the main result of the paper, Theorem~\ref{thm:54}.

\subsection{Main results}

Let $G$ be a linear $D_k$-group over $k$ and $V$ be a faithful finite-dimensional representation of $G$. Let $A$ be a $D_k$-Hopf algebra over $k$ that corresponds to $G$. A $D_k$-connection on $V$ as an object in $\Cat=\Rep(G)$ (see Definition~\ref{defin-Dkobject}) is a $D_k$-connection on $V$ as a $k$-vector space such that the
coaction map
$$
V\to V\otimes_k A
$$
is a morphism of $k$-vector spaces with $D_k$-connections.
Equivalently, for any $D_k$-algebra $R$, the action of the group $G(R)$ on $R\otimes_k
V$ commutes with the $D_k$-connection.

A $D_k$-connection on $V$ in $\Rep(G)$ is a $D_k$-structure if and only if $V$ is a $D_k$-module. In this case, we also say that $V$ is a {\it $D_k$-representation} of $G$.

\begin{definition}\label{defin-conj}
We say that $G$ {\it is conjugate to a constant subgroup in
$\GL(V)$} if there is a $k_0$-vector space $V_0$ and an isomorphism
$k\otimes_{k_0}V_0\cong V$ of $k$-vector spaces such that there is an embedding in
$\GL(V)$:
$$
G\subset {\GL(V_0)}^c
$$
(see Section~\ref{sec:prelnot} for the definition of ${\GL(V_0)}^c$).
\end{definition}

Note that if $G$ is conjugate to a constant subgroup in $\GL(V)$,
then $G$ is constant: there is an algebraic subgroup $G_0\subset
\GL(V_0)$ such that the isomorphism $$k\otimes_{k_0}V_0\cong V$$
induces the equality $G={(G_0)}^c$ in $\GL(V)$.
We say that $G$ {\it is
conjugate to a reductive constant subgroup in $\GL(V)$} if $G_0$ is reductive.

For an explicit description of Definition~\ref{defin-conj}, choose a
basis in $V$ over $k$. Then $\GL(V)\cong \GL_n(k)$ for some $n$. By
the differential Nullstellensatz (see~\cite[Theorem~IV.2.1]{Kol}), $G$ is conjugate to a constant
subgroup in $\GL(V)$ if and only of there is an element $g\in
\GL_n(k)$ such that, for a $D_k$-closed field $\U$ over $k$
(equivalently, for any $\U$ as above), we have
$$
g^{-1}G(\U)g\subset \GL_n(\U_0),\quad \U_0:=\U^{D_k}.
$$

\begin{example}
Let $k=k_0(t)$, $D_k=k\cdot\partial_t$,
$G\subset \G_a$ be given by the linear equation
$$
\partial_t^2u=0,\quad u\in\G_a,
$$
and let $V$ be a faithful representation of $G$ given by the faithful upper-triangular two-dimensional representation of $\G_a$. Then
$$
G\cong {\left(\G_a^2\right)}^c
$$
is constant, because $\partial_t^2(1)=\partial_t^2(t)=0$.
On the other hand, $G$ is not conjugate to a constant subgroup in $\GL(V)$, because there are no faithful two-dimensional representations of the linear algebraic group $\G^2_a$ over $k_0$. This shows that a constant linear $D_k$-group is not necessarily conjugate to a constant subgroup in $\GL(V)$ for a faithful representation $V$ of $G$.
\end{example}

The following result is also proved
in~\cite[Corollary~1]{OvchRecoverGroup} but only for the case of a
differentially closed field with one derivation.

\begin{proposition}\label{prop-const1}
The $D_k$-group $G$ is conjugate to a constant subgroup in $\GL(V)$ if
and only if there is a $D_k$-structure on $V$ in $\Rep(G)$ such that
$V$ is a trivial $D_k$-module.
\end{proposition}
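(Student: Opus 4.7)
The plan is to reduce both sides of the claimed equivalence to the same assertion about how $G(R)$ acts on $R\otimes_k V$ for every $D_k$-algebra $R$. I will rely on the characterizations already recorded in Section~\ref{sec:prelnot} and in the discussion preceding the proposition: a $D_k$-connection on $V$ in $\Rep(G)$ is a $D_k$-connection on $V$ as a $k$-vector space such that, for every $D_k$-algebra $R$, the action of $G(R)$ on $R\otimes_k V$ commutes with the connection; such a connection is a $D_k$-structure precisely when $V$ is a $D_k$-module; and, for any $k_0$-form $V_0$ of $V$, one has ${\GL(V_0)}^c(R)=\Aut_{R^{D_k}}(R^{D_k}\otimes_{k_0}V_0)$ inside $\GL(V)(R)=\Aut_R(R\otimes_k V)$ via the canonical identification $R\otimes_k V\cong R\otimes_{k_0}V_0$.

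For the ``only if'' direction, I would assume $G\subset{\GL(V_0)}^c$ in $\GL(V)$ together with an isomorphism $V\cong k\otimes_{k_0}V_0$, and equip $V$ with the trivial $D_k$-module structure given by $D_k$ acting through the first tensor factor. This structure is automatically flat, and $V^{D_k}=V_0$, so $V$ is indeed a trivial $D_k$-module. For any $D_k$-algebra $R$, each $g\in G(R)$ lies by hypothesis in $\Aut_{R^{D_k}}(R^{D_k}\otimes_{k_0}V_0)$, and so acts on $R\otimes_k V=R\otimes_{k_0}V_0$ as the $R$-linear extension of an $R^{D_k}$-linear automorphism of $R^{D_k}\otimes_{k_0}V_0$. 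Since $D_k$ acts only on the first factor, $g$ commutes with $D_k$, which is exactly the compatibility required for $V$ to be a $D_k$-object in $\Rep(G)$.

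Conversely, I would assume a $D_k$-structure on $V$ in $\Rep(G)$ making $V$ a trivial $D_k$-module, and put $V_0:=V^{D_k}$. Triviality yields $V\cong k\otimes_{k_0}V_0$ as $D_k$-modules, hence $R\otimes_k V\cong R\otimes_{k_0}V_0$ with $(R\otimes_k V)^{D_k}=R^{D_k}\otimes_{k_0}V_0$ for every $D_k$-algebra $R$. The compatibility of the $D_k$-structure with the $G$-action says that each $g\in G(R)$ commutes with $D_k$ on $R\otimes_k V$, so $g$ preserves $(R\otimes_k V)^{D_k}=R^{D_k}\otimes_{k_0}V_0$ and acts there $R^{D_k}$-linearly, placing $g$ in ${\GL(V_0)}^c(R)$; functoriality in $R$ then upgrades this to $G\subset{\GL(V_0)}^c$ in $\GL(V)$. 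The one point that needs care is the identification $(R\otimes_k V)^{D_k}=R^{D_k}\otimes_{k_0}V_0$, which is immediate here because $V$ is trivial and the injection $k\otimes_{k_0}M^{D_k}\to M$ of Section~\ref{sec:prelnot} becomes an isomorphism in the trivial case. Beyond this bookkeeping, the proof is just the translation between embeddings of linear $D_k$-groups and the compatibilities of their actions on representations, which is already built into the definitions recalled above.
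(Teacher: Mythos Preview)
Your proof is correct and follows exactly the approach of the paper: for the forward direction, use the isomorphism $V\cong k\otimes_{k_0}V_0$ to equip $V$ with the trivial $D_k$-structure, and for the converse, set $V_0:=V^{D_k}$. The paper's proof is simply a two-sentence version of yours, leaving the compatibility verifications you spell out (that $G(R)$ commutes with the connection, and that horizontal sections are preserved) to the reader.
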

\begin{proof}
If $G$ is conjugate to a constant subgroup, then the isomorphism
$k\otimes_{k_0}V_0\cong V$ defines a $D_k$-structure on $V$ in
$\Rep(G)$, where $V_0$ is as in Definition~\ref{defin-conj}. Conversely, suppose that we are given a $D_k$-structure on $V$ that satisfies the hypothesis of the proposition. Then put $V_0:=V^{D_k}$.
\end{proof}

Combining Propositions~\ref{prop-main},~\ref{prop-connequiv}, and~\ref{prop-const1}, we obtain the following result.

\begin{theorem}\label{theor-conjug}
Suppose that $(k,D_k)$ is filtered-linearly closed (see Definition~\ref{defin-good}) and linearly $D_k$-closed (see the comment following Definition~\ref{defin-good}). Then $G$ is conjugate to a $D_k$-constant subgroup in
$\GL(V)$ if and only if there is a (possibly, non-commuting) basis
$\partial_1,\ldots,\partial_d$ in $D_k$ over $k$ such that, for all~$i$, $1\Le i\Le d$, $G$
is conjugate to a $\partial_i$-constant subgroup in $\GL(V)$.
\end{theorem}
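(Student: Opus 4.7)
The forward implication is immediate: if $G$ is conjugate to a $D_k$-constant subgroup in $\GL(V)$, then the same conjugating isomorphism $k\otimes_{k_0}V_0\cong V$ exhibits $G$ as conjugate to a $\partial$-constant subgroup for every $\partial\in D_k$, in particular for any chosen basis $\partial_1,\ldots,\partial_d$.

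For the reverse implication, the plan is to assemble the three cited propositions in the following way. Fix a basis $\partial_1,\ldots,\partial_d$ of $D_k$ over $k$ witnessing the hypothesis. By Proposition~\ref{prop-const1}, applied to the canonical $\partial_i$-category structure on $\Rep(G)$ (obtained from the $D_k$-structure via \cite[Proposition~4.12(i)]{GGO}), each conjugation to a $\partial_i$-constant subgroup yields a $\partial_i$-structure on $V$ in $\Rep(G)$; in particular, it yields a $\partial_i$-connection on $V$ in $\Rep(G)$. Proposition~\ref{prop-connequiv} then packages these $\partial_i$-connections into a single $D_k$-connection $s_V$ on $V$ in $\Rep(G)$.

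The core step is to promote $s_V$ to a genuine $D_k$-structure. Here I would invoke Proposition~\ref{prop-main} with $\Cat=\Rep(G)$ and the faithful differential functor being the forgetful functor $\Rep(G)\to\Vect(k)$ (using the morphism of differential fields $(k,D_k)\to(k,D_k)$). Filtered-linear closedness of $(k,D_k)$ is precisely the hypothesis needed, and it produces a $D_k$-structure on $V$ in $\Rep(G)$, i.e., the structure of a $D_k$-representation of $G$ on $V$.

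It remains to observe that $V$ is trivial as a $D_k$-module over $k$: this is immediate from the linearly $D_k$-closed assumption, since $V$ is finite-dimensional. Having a $D_k$-structure on $V$ in $\Rep(G)$ together with triviality of $V$ as a $D_k$-module, a second application of Proposition~\ref{prop-const1} delivers the conjugation of $G$ to a $D_k$-constant subgroup in $\GL(V)$. The only genuinely substantive step is the appeal to Proposition~\ref{prop-main} to pass from a $D_k$-connection to a $D_k$-structure; the remainder is bookkeeping that translates between the categorical language of Section~\ref{sec:categorical} and the group-theoretic condition in Definition~\ref{defin-conj}.
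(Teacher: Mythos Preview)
Your proposal is correct and follows exactly the route the paper takes: the paper's own proof is the single sentence ``Combining Proposition~\ref{prop-main}, Proposition~\ref{prop-connequiv}, and Proposition~\ref{prop-const1}, we obtain the following result,'' and you have simply spelled out how those three ingredients fit together, including the appeal to the forgetful functor $\Rep(G)\to\Vect(k)$ as the required faithful differential functor and the use of linear $D_k$-closedness to secure triviality of the resulting $D_k$-module $V$.
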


Let $l$ be a $D_k$-field over $k$, $D_l:=l\otimes_k D_k$. We have $$l\otimes_k\Rep(G)\cong \Rep(G_l).$$ Combining Propositions~\ref{prop-descent} and~\ref{prop-const1}, we obtain the following result.

\begin{proposition}\label{prop-descentgrp}
Suppose that $(k,D_k)$ is filtered-linearly closed and $l$ is linearly $D_l$-closed. Then the linear $D_l$-group $G_l$ is conjugate to a $D_l$-constant subgroup in $\GL(V_l)$ if and only if there is a $D_k$-structure on $V$ in $\Rep(G)$.
\end{proposition}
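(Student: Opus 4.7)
The plan is to combine the two main tools of this section: Proposition~\ref{prop-const1}, which translates conjugation to a constant subgroup into the existence of a $D$-structure on $V$ whose underlying differential module is trivial, and Proposition~\ref{prop-descent}, which exchanges $D_k$-structures in $\Rep(G)$ with $D_l$-structures in $l\otimes_k\Rep(G)\cong\Rep(G_l)$. Proposition~\ref{prop-descent} applies to $\Cat=\Rep(G)$ via the faithful differential forgetful functor $\Rep(G)\to\Vect(k)$ (so one takes $K=k$ and $D_K=D_k$), and the equivalence $l\otimes_k\Rep(G)\cong\Rep(G_l)$ recalled just before the proposition is compatible both with this fiber functor and with the canonical $D_k$-structure on $\Rep(G)$ from Section~\ref{sec:prelnot}.

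For the implication ``$\Rightarrow$'', suppose that $G_l$ is conjugate to a $D_l$-constant subgroup of $\GL(V_l)$. Applying Proposition~\ref{prop-const1} over the differential field $(l,D_l)$ yields a $D_l$-structure on $V_l$ in $\Rep(G_l)$ (in fact one whose underlying $D_l$-module is trivial, although this extra information is not used here). Proposition~\ref{prop-descent}, whose filtered-linearly closed hypothesis on $(k,D_k)$ is exactly what the statement assumes, then descends this $D_l$-structure to a $D_k$-structure on $V$ in $\Rep(G)$, as required.

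Conversely, given a $D_k$-structure on $V$ in $\Rep(G)$, extension of scalars along the functor $l\otimes_k-$ — the easy direction of Proposition~\ref{prop-descent} — produces a $D_l$-structure on $V_l$ in $\Rep(G_l)$. Since $l$ is linearly $D_l$-closed, every finite-dimensional $D_l$-module over $l$ is trivial, so in particular the underlying $D_l$-module of $V_l$ is automatically trivial. Proposition~\ref{prop-const1}, applied over $(l,D_l)$, then produces the desired conjugation of $G_l$ to a $D_l$-constant subgroup of $\GL(V_l)$.

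The two hypotheses split cleanly between the two directions: filtered-linear closedness of $(k,D_k)$ is used only for the descent from $l$ down to $k$, while linear $D_l$-closedness of $l$ is used only to make the relevant finite-dimensional $D_l$-module trivial. No individual step is the main obstacle; the proof is essentially a formal gluing of Proposition~\ref{prop-const1} and Proposition~\ref{prop-descent}, and the single point worth checking is the compatibility of the identification $l\otimes_k\Rep(G)\cong\Rep(G_l)$ with the forgetful fiber functor and with the canonical $D$-structures on both sides.
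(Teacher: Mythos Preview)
Your proof is correct and follows exactly the approach indicated in the paper, which simply states that the result follows by combining Proposition~\ref{prop-descent} and Proposition~\ref{prop-const1}. You have accurately unpacked the two directions and identified which hypothesis is used where.
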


The following result is also proved
in~\cite[Theorem~3.14]{diffreductive} but just for the case of a
differentially closed field with one derivation.

\begin{theorem}\label{thm:redsemisimple}
Suppose that $(l,D_l)$ is filtered-linearly closed and linearly $D_l$-closed. Then the linear $D_l$-group $G_l$ is conjugate to a reductive constant subgroup in $\GL(V_l)$ if and only if the category $\Rep(G)$ is
semisimple.
\end{theorem}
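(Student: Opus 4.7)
My plan is to reduce Theorem~\ref{thm:redsemisimple} to the single-derivation differentially closed case \cite[Theorem~3.14]{diffreductive} by using the categorical machinery built in Section~\ref{sec:categorical}; the bridging tools will be Propositions~\ref{prop-main}, \ref{prop-connequiv}, \ref{prop-const1}, and~\ref{prop-descent}.

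For the direction ``reductive constant $\Rightarrow$ semisimple'', suppose $G_l=(G_0)^c$ in $\GL(V_l)$ with $G_0\subset\GL(V_{l_0})$ reductive over $l_0=l^{D_l}$. The functor $(G_0)^c$ is co-represented by the finitely generated Hopf algebra $l\otimes_{l_0}A_0$, which identifies $(G_0)^c$ as a linear algebraic group over $l$ with the base change $G_0\times_{l_0}l$. Since $G_0$ is reductive and the characteristic is zero, this base change is reductive over $l$, whence $\Rep(G_l)=\Rep(G_0\times_{l_0}l)$ is semisimple. The equivalence $\Rep(G_l)\cong l\otimes_k\Rep(G)$ together with faithful flatness of $l/k$ then descends semisimplicity to $\Rep(G)$.

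For the converse, assume $\Rep(G)$ is semisimple; so is $\Rep(G_l)\cong l\otimes_k\Rep(G)$. Fix a basis $\partial_1,\ldots,\partial_d$ of $D_l$ witnessing the filtered-linear closedness, and for each $i$ let $\mathcal{U}_i$ be a $\partial_i$-closure of $l$ (viewing $l$ as a differential field with the single derivation $\partial_i$). Extension of scalars preserves semisimplicity, so $\Rep(G_{\mathcal{U}_i})$ remains semisimple, and the single-derivation differentially closed statement \cite[Theorem~3.14]{diffreductive} gives that $G_{\mathcal{U}_i}$ is conjugate to a reductive $\partial_i$-constant subgroup of $\GL(V_{\mathcal{U}_i})$. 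Proposition~\ref{prop-const1} converts this into a $\partial_i$-structure on $V_{\mathcal{U}_i}$ in $\Rep(G_{\mathcal{U}_i})$ trivial as a $\partial_i$-module, and Proposition~\ref{prop-descent}, applied to the $\partial_i$-category $\Rep(G_l)$ together with the faithful differential functor $\Rep(G_l)\to\Vect(l)$ (the filtered-linear closedness hypothesis on $(l,\partial_i)$ being automatic for one derivation), descends this to a $\partial_i$-connection on $V_l$ in $\Rep(G_l)$.

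Proposition~\ref{prop-connequiv} now assembles these $\partial_i$-connections, $i=1,\ldots,d$, into a $D_l$-connection on $V_l$ in $\Rep(G_l)$, and Proposition~\ref{prop-main}, whose hypotheses are satisfied by the filtered-linear closedness of $(l,D_l)$ and the forgetful differential functor $\Rep(G_l)\to\Vect(l)$, upgrades this connection to a $D_l$-structure. Linear $D_l$-closedness then makes the underlying $D_l$-module trivial, so Proposition~\ref{prop-const1} yields $G_l=(G_0)^c$ in $\GL(V_l)$ for some algebraic subgroup $G_0\subset\GL(V_{l_0})$. Reductivity of $G_0$ follows because $V_l=l\otimes_{l_0}V_{l_0}$ is semisimple as a $G_l=(G_0)^c=G_0\times_{l_0}l$-representation, so by faithfully flat descent along $l_0\hookrightarrow l$ the representation $V_{l_0}$ is semisimple over $G_0$; faithfulness of $V_{l_0}$ in characteristic zero then forces $G_0$ to be reductive. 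The main technical obstacle I anticipate is the categorical bookkeeping needed to identify $(G_0)^c$ with $G_0\times_{l_0}l$ at the level of representations and to verify that semisimplicity transfers cleanly through the tensor-category equivalence $\Rep(G_l)\cong l\otimes_k\Rep(G)$; once these identifications are secured, the propositions from Section~\ref{sec:categorical} do the remaining work mechanically.
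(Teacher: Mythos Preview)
Your argument is correct, but it takes a substantially longer route than the paper's for the harder direction. The paper's key observation is that semisimplicity of $\Rep(G)$ \emph{immediately} yields a $D_k$-connection on $V$ in $\Rep(G)$: the Atiyah exact sequence
\[
0\longrightarrow \Omega_k\otimes_k V\longrightarrow \At^1_{\Rep(G)}(V)\longrightarrow V\longrightarrow 0
\]
lives in $\Rep(G)$ and therefore splits. This one line replaces your entire detour through the $\partial_i$-closures $\mathcal{U}_i$, the special case \cite[Theorem~3.14]{diffreductive}, Proposition~\ref{prop-descent}, and Proposition~\ref{prop-connequiv}. From the $D_k$-connection on $V$ one extends scalars to $l$, invokes Proposition~\ref{prop-main} once to upgrade to a $D_l$-structure, uses linear $D_l$-closedness for triviality, and finishes with Proposition~\ref{prop-const1}; reductivity of $G_0$ then follows (as in your last paragraph) by descent. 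Your reduction to the already-known single-derivation result is logically sound but somewhat against the spirit of the theorem, which is advertised as giving a new and self-contained proof even of that special case; it also requires checking that $G_l$, viewed with only the single derivation $\partial_i$, is still of the type to which \cite[Theorem~3.14]{diffreductive} applies. For the easy direction the paper likewise works directly over $k$: constancy of $G_l$ forces $k[G]$ to be finitely generated, and descent of reductivity along $k\subset l$ shows $G$ is a reductive algebraic group over $k$, so $\Rep(G)$ is semisimple without passing through $l\otimes_k\Rep(G)$.
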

\begin{proof}
We will use the following fact: given a field extension $E\subset F$, a Hopf algebra $A$ over $E$ corresponds to a reductive linear algebraic group over $E$ if and only if this holds for the extension of scalars $A_F$ over $F$ (see~\cite[Remark~2.1.3(ii)]{Demazure1965}).

Assume that $G_l$ is conjugate to a reductive constant
subgroup in $\GL(V_l)$. By the fact above, this implies that $G$ is a reductive algebraic group over $k$ (with the $D_k$-structure forgotten). Since $\Char k=0$, we obtain that $\Rep(G)$ is semisimple (see~\cite[Chapter~2]{SpringerInv}).

Now assume that $\Rep(G)$ is semisimple. Then there is a $D_k$-connection on $V$ as all exact sequences in $\Rep(G)$ are split. This induces a $D_l$-connection on $V_l$ in $\Rep(G_l)$. By Proposition~\ref{prop-main},
there is a $D_l$-structure on $V_l$ in $\Rep(G_l)$. By
Proposition~\ref{prop-const1}, $G_l$ is conjugate to a constant
subgroup in $\GL(V_l)$. Hence, $k[G]$ is a finitely generated algebra over $k$. Since $\Rep(G)$ is semisimple and $\Char k=0$, we obtain that $G$ is reductive as an algebraic group over $k$. Again, by \cite[Remark~2.1.3(ii)]{Demazure1965}, this implies that $G_l$ is conjugate to a reductive constant subgroup in $\GL(V_l)$.
\end{proof}

\subsection{Examples}\label{sec:nonconst}
First, we provide a non-trivial example to Proposition~\ref{prop-main}.

\begin{example}\label{example-new}
Let $$k:=\Q(t_1,t_2)\quad\text{and}\quad D_k:=k\cdot \partial_{t_1}\oplus k\cdot\partial_{t_2}.$$
Let $V$ be a $3$-dimensional $k$-vector space with a basis $\bar e:=(e_1,e_2,e_3)$. Consider the $D_k$-connection $\nabla_V$ on $V$ given~by
$$
\nabla_V(\bar e):=-\dd t_1\otimes\bar e\cdot B_1-\dd t_2\otimes\bar e\cdot B_2,
$$
where
$$
B_1:=\begin{pmatrix}
0&\frac{1}{t_1}&0\\
0&0&0\\
0&0&0
\end{pmatrix},\quad
B_2:=\begin{pmatrix}
0&0&0\\
0&0&\frac{1}{t_2}\\
0&0&0
\end{pmatrix}.
$$
That is, we have
$$
\partial_{t_i}(\bar e)=-\bar e\cdot B_i.
$$
Note that
\begin{equation}\label{eq:Aicurvature}
\partial_{t_2}B_1-\partial_{t_1}B_2-[B_2,B_1]=\frac{1}{t_1t_2}\cdot \varepsilon,
\quad \text{where}\quad \varepsilon:=
\begin{pmatrix}
0&0&1\\
0&0&0\\
0&0&0
\end{pmatrix}.
\end{equation}
In particular, the $D_k$-connection $\nabla_V$ on $V$ is not a $D_k$-structure on $V$.
Further, consider the unipotent subgroup $\Um$ in $\GL(V)$ that consists of matrices of the following form (with respect to the basis $\bar e$):
$$
g(u_1,u_2,v) := \begin{pmatrix}
1&u_1&v\\
0&1&u_2\\
0&0&1
\end{pmatrix}.
$$
Let $G$ be the linear $D_k$-subgroup in $\Um$ given by the equations
$$
\partial_{t_i} u_j=0,\quad i=1,2,\,j=1,2,
$$
$$
\partial_{t_1} v=\frac{1}{t_1}\cdot u_2,\quad \partial_{t_2} v=-\frac{1}{t_2} u_1.
$$
Note that these equations are equivalent to the equations
$$
\partial_{t_i}g(u_1,u_2,v)+[g(u_1,u_2,v),B_i]=0,\quad i=1,2.
$$
This means that the action of $G$ on $V$ commutes with the action of $D_k$ (see also the discussion following Lemma~\ref{lemma-aut}), that is, $\nabla_V$ is a $D_k$-connection on $V$ as an object in $\Rep(G)$.

Let us show that there is no $D_k$-structure on $V$ in $\Rep(G)$. Assume the converse. By~\eqref{eq:allconnections} (see Section~\ref{sec:categorical}), this means that there exist $C_1,C_2 \in \End_G(V)$ such that
\begin{equation}\label{eq:Aprime}
\partial_{t_2}A_1-\partial_{t_1}A_2-{\left[A_2,A_1\right]}=0,\quad A_i:=B_i + C_i,\ i=1,2.
\end{equation}
A calculation shows that we have an isomorphism (via choosing the basis $\bar e$)
\begin{equation}\label{eq:Endisom}
\End_G(V)\cong k\cdot\Id\oplus\, k\cdot\varepsilon.
\end{equation}
Since
$$
[B_i,\varepsilon]=0,\,i=1,2,
$$
we see that~\eqref{eq:Aicurvature} and~\eqref{eq:Endisom} imply that~\eqref{eq:Aprime} holds if and only if there exist $f_1,f_2\in k$ such that
\begin{equation}\label{eq:f1f2}
\frac{1}{t_1t_2}+\partial_{t_2}f_1-\partial_{t_1}f_2=0.
\end{equation}
This implies that the coefficient of $t_1^{-1}$ with values in $\Q(t_2)$ of the function
$$
\frac{1}{t_1t_2}+\partial_{t_2}f_1
$$
vanishes. Therefore, we have
$$
\frac{1}{t_2}+\partial_{t_2}(a_{-1})=0,\quad\text{where}\quad f_1=\sum_ia_i t_1^i,\ \  a_i\in\Q(t_2).
$$
This gives a contradiction. Thus, we see that Proposition~\ref{prop-main} is not true over an arbitrary field $(k,D_k)$.
\end{example}

Next, we describe two types of $D_k$-subgroups in $\GL_n(k)$ that are not
constant over $k$ but are conjugate to constant subgroups in
$\GL_n(l)$ over $l$, where $l$ is a Picard--Vessiot extension of $k$. Let $M$ be a finite-dimensional $D_k$-module over $k$.


\begin{lemma}\label{lemma-aut}
The group-valued functor
$$
\GL^{D_k}(M):\DAlg(k,D_k)\to \Sets,\quad R\mapsto
\Aut^{D_k}_R(R\otimes_k M)
$$
is represented by a linear $D_k$-group.
\end{lemma}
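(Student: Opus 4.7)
The plan is to exhibit $\GL^{D_k}(M)$ as a closed $D_k$-subgroup-functor of $\GL(M)$. By the discussion in Section~\ref{sec:prelnot}, $\GL(M)$ is already a linear $D_k$-group: since the forgetful functor $\DAlg(k,D_k) \to \Alg(k)$ admits a left adjoint (prolongation), the finitely generated Hopf $k$-algebra $k[\GL(M)]$ extends to a $D_k$-finitely generated $D_k$-Hopf algebra $\tilde A$ corepresenting $\GL(M)$ on $\DAlg(k,D_k)$. What needs to be produced is a $D_k$-Hopf ideal $I\subset \tilde A$ whose zero locus corepresents $\GL^{D_k}(M)$.

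To write down $I$, I would fix a $k$-basis $e_1,\ldots,e_n$ of $M$ and a $k$-basis $\partial_1,\ldots,\partial_d$ of $D_k$, and let $B_i\in \gln_n(k)$ be the connection matrices of the $D_k$-module structure (so $\partial_i(\bar e)=-\bar e\cdot B_i$, as in Example~\ref{example-new}). For $g = (x_{jk})\in\GL_n(R)$ with $R\in\DAlg(k,D_k)$, a direct Leibniz-rule calculation shows that the endomorphism of $R\otimes_kM$ induced by $g$ commutes with every $\partial_i$ on $R\otimes_k M$ if and only if the matrix equations
$$
\partial_i(g) = [B_i,g], \qquad i=1,\ldots,d,
$$
hold in $\Mn_n(R)$. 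I would then take $I\subset\tilde A$ to be the $D_k$-ideal generated by the entries of the matrices $\partial_i(x) - [B_i,x]$ applied to the universal matrix $x=(x_{jk})$.

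Corepresentability of $\GL^{D_k}(M)$ by $\tilde A/I$ is then immediate: a $D_k$-homomorphism $\tilde A \to R$ corresponds via adjunction to a $k$-homomorphism $k[\GL(M)]\to R$, i.e., an element $g\in\GL(M)(R)$, and since $D_k$-homomorphisms commute with the derivations, such a map kills $I$ iff $g$ satisfies the above equations iff $g\in\GL^{D_k}(M)(R)$. Finite $D_k$-generation is inherited from $\tilde A$. The remaining point is that $I$ is a Hopf ideal, and this I would verify at the level of functors: if $g,h$ both satisfy $\partial_i(\,\cdot\,)=[B_i,\,\cdot\,]$, then by the Leibniz rule
$$
\partial_i(gh) = \partial_i(g)\,h + g\,\partial_i(h) = [B_i,g]h + g[B_i,h] = [B_i, gh],
$$
and differentiating $g g^{-1}=1$ yields $\partial_i(g^{-1})=[B_i,g^{-1}]$; together with the trivial case $g=1$ this shows that $\GL^{D_k}(M)$ is closed under the group operations inside $\GL(M)$, so $I$ is preserved by comultiplication, counit, and antipode. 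The only mildly delicate step is this Hopf-ideal verification, but as indicated it is a one-line Leibniz computation; the rest of the argument is formal, and the proposition follows.
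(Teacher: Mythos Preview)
Your argument is correct and, in substance, the same as the paper's. The paper's proof is just packaged more tersely: rather than forming the prolongation $\tilde A$ of $k[\GL(M)]$ and then quotienting by your ideal $I$, it observes that the ordinary Hopf algebra $k[\GL(M)]$ itself already carries a canonical $D_k$-structure, namely the one induced from the $D_k$-module $\End_k(M)\cong M^{\vee}\otimes_k M$ on its symmetric algebra (localized at the determinant), and that this $D_k$-Hopf algebra corepresents $\GL^{D_k}(M)$. Your quotient $\tilde A/I$ is exactly this object: the relations $\partial_i(x)=[B_i,x]$ express every derivative of a matrix coordinate as a polynomial in the matrix coordinates, so the prolongation collapses back to $k[\GL(M)]$ equipped with that $D_k$-structure, and the explicit equations $\partial g+[g,A_\partial]=0$ recorded in the paper immediately after the proof are your defining equations. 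The paper's formulation has the small advantage of making it visible at once that the representing algebra is finitely generated over $k$ (not merely $D_k$-finitely generated), which is used in the paragraph on constancy following the lemma.
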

\begin{proof}
The corresponding finitely generated $D_k$-Hopf algebra is the Hopf
algebra of the algebraic group $\GL(M)$ over $k$ with the
$D_k$-structure obtained by the localization over the determinant of the $D_k$-structure
on the symmetric algebra of the $D_k$-module $\End_k(M)\cong
M^{\vee}\otimes_k M$.
\end{proof}

For an explicit description of $\GL^{D_k}(M)$, choose a basis $\bar
e=(e_1,\ldots,e_n)$ in $M$ over $k$. Then $\GL(M)\cong\GL_n(k)$. For
each $\partial\in D_k$, denote the corresponding connection
$(n\times n)$-matrix by $A_{\partial}$, that is, we have
$$
\partial(\bar e)=-\bar e\cdot A_{\partial}.
$$
By definition, $\GL^{D_k}(M)$ consists of invertible $(n\times
n)$-matrices such that the corresponding gauge transformation
preserves the connection matrices $A_{\partial}$ for all
$\partial\in D_k$. Thus, $\GL^{D_k}(M)$ is given by the differential
equations
$$
\partial g+[g,A_{\partial}]=0,\quad g\in \GL_n(k),\,\partial\in D_k.
$$

Note that $\GL^{D_k}(M)$ is a closed linear $D_k$-subgroup in the
linear $D_k$-group $\GL(M)$ and $M$ is faithful a $D_k$-representation of $\GL^{D_k}(M)$. A morphism of linear $D_k$-groups $$G\to \GL^{D_k}(M)$$ corresponds to a $D_k$-representation $V$ of $G$ such that $V\cong M$ as $D_k$-modules.

It follows directly from the proof of Lemma~\ref{lemma-aut} that the
linear $D_k$-group $\GL^{D_k}(M)$ is constant if and only if the
$D_k$-module $$M^{\vee}\otimes_k M$$ is trivial, because a submodule
of a trivial $D_k$-module is trivial and the determinant is a $D_k$-constant in $\Sym^n_k(M^\vee\otimes_k M)$. Besides, if $M$ is trivial, then $\GL^{D_k}(M)$ is conjugate to a constant subgroup in $\GL(M)$ (the converse is not true already for $\dim_k(M)=1$).

\begin{example}\label{example-PV}
Consider the differential field
$
k=\Q(t_1,t_2)$, $D_k:=k\cdot\partial_{t_1}\oplus
k\cdot\partial_{t_2}$
and the $D_k$-module $M:=\uno\oplus L$, where
$
L=k\cdot e$, $\partial_{t_1}(e)=0$, $\partial_{t_2}(e)=e$.
Since the $D_k$-module
$$
M^{\vee}\otimes_k M\cong\uno\oplus L\oplus L^{\vee}\oplus\uno
$$
is not trivial, the linear $D_k$-group
$$
G:=\GL^{D_k}(M)\subset \GL_2(k)
$$
is not constant and henceforth $G$ is
not conjugate to a constant subgroup in $\GL_2(k)$. Put
$$
\partial_1:=t_1\partial_{t_1},\quad
\partial_2:=t_1\partial_{t_1}+\partial_{t_2}.
$$
Then $[\partial_1,\partial_2]=0$ and $L$ is a trivial
$\partial_2$-module as $$\partial_2{\left(t_1^{-1}\cdot e\right)}=0.$$
Therefore, $M$ is a trivial $\partial_i$-module for $i=1,2$. By Proposition~\ref{prop-const1}, $G$ is
conjugate to a $\partial_i$-constant subgroup in $\GL_2(k)$ separately with respect to each $i$. This shows that Theorem~\ref{theor-conjug} is not
true for an arbitrary $(k,D_k)$.
\end{example}

The following type of non-constant groups will be used in Section~\ref{sec:IDE} in order to construct non-trivial examples to Theorem~\ref{theor-main}.

\begin{lemma}\label{lemma-invmod}
The group valued functor
$$
M^{D_k}:\DAlg(k,D_k)\to\Sets,\quad R\mapsto (R\otimes_k M)^{D_k}
$$
is represented by a linear $D_k$-group.
\end{lemma}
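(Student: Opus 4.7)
The plan is to exhibit an explicit $D_k$-Hopf algebra corepresenting $M^{D_k}$, in direct analogy with the proof of Lemma~\ref{lemma-aut}. Let $A := \Sym_k(M^{\vee})$ be the symmetric algebra of the $k$-linear dual. This is the Hopf algebra of $M$ regarded as the additive linear algebraic group over $k$: on generators $x\in M^{\vee}$ the comultiplication, counit, and antipode are $\Delta(x)=x\otimes 1+1\otimes x$, $\epsilon(x)=0$, $S(x)=-x$. The $D_k$-module structure on $M$ induces a dual $D_k$-module structure on $M^{\vee}$, which extends uniquely by the Leibniz rule to a $D_k$-algebra structure on $A$. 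Since the Hopf algebra structure maps are defined by $k$-linear, $D_k$-equivariant maps on the $D_k$-stable subspace $M^{\vee}$, I would verify on generators that $\Delta$, $\epsilon$, $S$ commute with every $\partial\in D_k$ (keeping in mind that $D_k$ acts on $A\otimes_k A$ by the Leibniz rule $\partial(a\otimes b)=\partial a\otimes b+a\otimes\partial b$, which is well-defined because of the compatibility of $D_k$ with the $k$-action). This makes $A$ a $D_k$-Hopf algebra, and it is finitely generated already as a $k$-algebra (being a polynomial ring in $\dim_k M$ variables), hence $D_k$-finitely generated.

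To verify that $A$ corepresents $M^{D_k}$, I would combine two natural bijections for $R\in\DAlg(k,D_k)$:
$$
\Hom_{\DAlg(k,D_k)}(A,R)\;\cong\;\Hom_{D_k}(M^{\vee},R),
$$
which expresses the universal property of $\Sym$ as the left adjoint to the forgetful functor from commutative $D_k$-algebras to $D_k$-modules, and
$$
\Hom_{D_k}(M^{\vee},R)\;\cong\;(R\otimes_k M)^{D_k},
$$
which is the restriction of the classical identification $R\otimes_k M\cong\Hom_k(M^{\vee},R)$ to $D_k$-equivariant maps. To justify the second bijection, I would fix a $k$-basis $e_1,\ldots,e_n$ of $M$ with dual basis $e_i^{*}$ and connection coefficients $\partial e_i=\sum_j a_{ij}(\partial) e_j$; a direct computation then shows that $\sum_i r_i\otimes e_i\in R\otimes_k M$ is annihilated by every $\partial\in D_k$ if and only if $\partial r_j=-\sum_i r_i a_{ij}(\partial)$, which is precisely the condition that the map $M^{\vee}\to R$ sending $e_i^{*}\mapsto r_i$ be $D_k$-equivariant with respect to the dual $D_k$-action on $M^{\vee}$.

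The bijections are functorial in $R$, and they intertwine the abelian group structure on $(R\otimes_k M)^{D_k}$ coming from the additive $R$-module structure on $R\otimes_k M$ with the group structure induced by the additive-group Hopf structure on $A$. Consequently, $M^{D_k}$ is a group-valued functor corepresented by a $D_k$-finitely generated $D_k$-Hopf algebra, hence a linear $D_k$-group. There is no substantial obstacle: the content is a single basis-level calculation identifying $D_k$-invariance of a tensor in $R\otimes_k M$ with $D_k$-equivariance of the associated linear functional on $M^{\vee}$, the rest being the universal property of the symmetric algebra.
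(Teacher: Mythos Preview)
Your proposal is correct and follows exactly the approach of the paper: the corepresenting $D_k$-Hopf algebra is the symmetric algebra $\Sym_k(M^{\vee})$ with the $D_k$-structure induced from the dual $D_k$-module $M^{\vee}$. The paper states this in one sentence (citing also \cite[Lemma~2.16]{Michael}), while you have additionally spelled out the verification of the functorial bijection $\Hom_{D_k}(\Sym_k(M^{\vee}),R)\cong (R\otimes_k M)^{D_k}$.
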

\begin{proof}
The corresponding finitely generated $D_k$-Hopf algebra is the Hopf
algebra of $(M,+)$, that is, the symmetric algebra of $M^\vee$, with
the induced $D_k$-structure (see also \cite[Lemma~2.16]{Michael}).
\end{proof}

For an explicit description of $M^{D_k}$, choose a basis in $M$ over
$k$. Then $(M,+)\cong\G_a^n$ for some $n$. For each $\partial\in
D_k$, denote the corresponding connection $(n\times n)$-matrix by
$A_{\partial}$. Then $M^{D_k}$ is given by the differential
equations
$$
\partial y=A_{\partial}\cdot y,\quad y\in \G_a^n,\,\partial\in D_k.
$$

Note that $M^{D_k}$ is a closed linear $D_k$-subgroup in the linear
$D_k$-group $M$. One can show that a linear $D_k$-group $G$ is isomorphic to ${(\G_a^n)}^c$
over some $D_k$-field $l$ over $k$ if and only if $G\cong M^{D_k}$ for some $n$-dimensional $D_k$-module $M$ over $k$ (see also~\cite[Proposition~11]{Cassidy}).

Assume that $D_k=k\cdot\partial$ for a derivation $\partial:k\to k$
and $k_0\ne k$. Let $m\in M$ be a cyclic vector (see \cite[Definition~2.8]{Michael}). Let $0\ne D$ be a
linear $\partial$-operator with coefficients in $k$ of the smallest order such that $Dm=0$, which exists because $M$ is finite-dimensional over $k$. Then
$$
{\left(M^\vee\right)}^{D_k}
$$
is isomorphic to the $D_k$-subgroup in $\G_a$ given by the equation
$
Du=0$, $u\in \G_a$,
because
they represent the same functor (see~\cite[Lemma~2.16]{Michael}).

\begin{remark}\label{remark-nonconstGa}
\hspace{0cm}
\begin{enumerate}
\item\label{en:719}
There is a faithful $D_k$-representation $V_M$ of $M^{D_k}$ defined
as follows: as a $D_k$-module, $V_M$ is $M\oplus\uno$, and the action
of $M^{D_k}$ is given by
$$
m:(n,c)\mapsto(n+c\cdot m,c),
$$
where $m\in M^{D_k}$, $n\in M$, and $c\in k$. We have an exact sequence of
$D_k$-representations of $M^{D_k}$
$$
\begin{CD}
0@>>> M@>>> V_M@>>> \uno@>>> 0,
\end{CD}
$$
where $M^{D_k}$ acts trivially on the $D_k$-modules $M$ and $\uno$.
\item\label{it:740}
One can show that there is a bijection between morphisms of linear $D_k$-groups $G\to M^{D_k}$ and isomorphism classes of exact sequences of $D_k$-representations of $G$
$$
\begin{CD}
0@>>> M@>>> V@>>> \uno@>>> 0,
\end{CD}
$$
where $G$ acts trivially on the $D_k$-modules $M$ and $\uno$. An argument shows that this
implies that, for a linear $D_k$-group $G$, the category $\Rep(G)$ is
$D_k$-equivalent to $\Rep{\left(M^{D_k}\right)}$ if and only if
$G\cong M^{D_k}$.
\end{enumerate}
\end{remark}

\section{Gauss--Manin connection and parameterized differential Galois groups}\label{sec:GM}

The main results of this section, Propositions~\ref{prop-intGalois} and~\ref{prop-PPV}, are used in Section~\ref{subsection-exampisom} in order to construct non-trivial examples to Theorem~\ref{theor-main}. The constructions and results of this section seem to have also their own interest in the parameterized differential Galois theory.

\subsection{Gauss--Manin connection}\label{subsection-GM}

We define algebraically a Gauss--Manin connection, which is used to describe a parameterized differential Galois group of integrals in Section~\ref{subsection-integrals}. For this, we use the Gauss--Manin connection on $H^1$ only, so that the reader may put $i=1$ in what follows if desired.

For any differential field $(K,D_K)$, let $H^i(K,D_{K})$ denote the cohomology groups of the de Rham complex $\Omega^{\bullet}_K$ (see Section~\ref{sec:prelnot}). That is, we have
$$
H^i(K,D_K):=\Ker{\big(\Omega^i_K\stackrel{\dd}\longrightarrow \Omega^{i+1}_{K}\big)}\big/\im{\big(\Omega^{i-1}_K\stackrel{\dd}\longrightarrow
\Omega^i_{K}\big)},\ i \Ge 1,
$$
and $H^0(K,D_K)=K^{D_K}$. Recall that, for $\partial\in D_K$, the Lie derivative is defined as follows (see~\cite[\S3.10]{GGO}):
$$
L_{\partial}=\dd\circ i_{\partial}+i_{\partial}\circ\dd:\Omega^i_K\to\Omega^i_K,
$$
where
$$
i_{\partial}:\Omega^i_K\to\Omega^{i-1}_K,\quad \omega\mapsto\left\{a\mapsto\omega(\partial\wedge a),\quad a\in\wedge^{i-1}_K D_K\right\},\ i \Ge 1
$$
and $i_{\partial}=0$ for $i=0$. In particular,
$$
L_{\partial}(a)=\partial(a)\quad \text{for any}\ \ a\in K.
$$
It follows from the definition that the Lie derivative commutes with $\dd$, acts as zero on $H^i(K,D_K)$, satisfies the Leibniz rule
$$
L_{\partial}(\omega\wedge\eta)=L_{\partial}(\omega)\wedge\eta+\omega\wedge L_{\partial}(\eta)
$$
for all $\omega\in\Omega_K^i$, $\eta\in \Omega^j_K$, $i,j\Ge0$, and we have $$L_{[\partial,\delta]}=[L_{\partial},L_{\delta}]\quad \text{for all}\ \ \partial,\delta\in D_K.$$
Let now $(K,D_K)$ be a parameterized differential field over $(k,D_k)$. We have the relative de Rham complex $\Omega^{\bullet}_{K/k}$, where $$\Omega_{K/k}:=D_{K/k}^{\vee}.$$ For short, put
$$
H^i(K/k):=H^i(K,D_{K/k}),\ \ i\geqslant 0.
$$
Then $H^i(K/k)$ are $k$-vector spaces, because the differential on $\Omega^{\bullet}_{K/k}$ is $k$-linear. Moreover, there is a canonical $D_k$-structure on $H^i(K/k)$, called a {\it Gauss--Manin connection} and constructed as follows.

For $\partial\in D_k$, let $\tilde\partial\in D_K$ be any lift of $1\otimes\partial$ with respect to the structure map $D_K\to K\otimes_k D_k$. One checks that the action of $L_{\tilde\partial}$ on $\Omega_K$ preserves $\Omega_k$. Since the kernel
$$
C^{\bullet}:=\Ker\big(\Omega_K^{\bullet}\to\Omega_{K/k}^{\bullet}\big)
$$
is generated by $\Omega_k$ as an ideal in $\Omega^{\bullet}_K$ with respect to the wedge product and $L_{\tilde\partial}$ satisfies the Leibniz rule as mentioned above, we see that the action of $L_{\tilde\partial}$ on $\Omega^{\bullet}_K$ preserves the subcomplex $C^{\bullet}$. Therefore, $L_{\tilde\partial}$ is well-defined on the quotient $\Omega^{\bullet}_{K/k}$. Since $D_{K/k}$ acts as zero on $H^i(K/k)$, we obtain a well-defined action of $D_k$ on $H^i(K/k)$. Finally, one checks that the corresponding map $$D_k\to \End_{\Z}{\left(H^i(K/k)\right)}$$ is $k$-linear, whence $H^i(K/k)$ is a $D_k$-module.

Explicitly, for any $\omega\in\Omega^i_{K/k}$ with $\dd\omega=0$, we have
\begin{equation}\label{eq:GM}
\partial[\omega]={\left[L_{\tilde\partial}(\tilde\omega)\right]},
\end{equation}
where $\tilde\omega\in\Omega^i_K$ is any lift of $\omega$ with respect to the map $\Omega^i_K\to\Omega^i_{K/k}$, and the brackets denote taking the class in $H^i(K/k)$. The preceding discussion shows that $\partial[\omega]$ is well-defined.


\begin{example}\label{examp-GM}
\hspace{0cm}
\begin{enumerate}
\item
We have $H^0(K/k)=K^{D_{K/k}}=k$ with the usual $D_k$-structure.
\item\label{en:770}
Suppose that $D_{K/k}=K\cdot\partial_x$. Then $$\Omega_{K/k}=\Omega_{K/k}^1=K\cdot\omega_x\quad \text{with}\quad \omega_x(\partial_x)=1,\ \ \dd a=\partial_x(a)\cdot\omega_x$$ for any $a\in K$, and $\Omega^i_{K/k}=0$ for $i\Ge 2$. Hence, there is an isomorphism
$$
K/(\partial_x K)\stackrel{\sim}\longrightarrow H^1(K/k),\quad [a]\mapsto [a\cdot\omega_x],
$$
where $a\in K$. Under the above isomorphism, the Gauss--Manin connection on $H^1(K/k)$ corresponds to the $D_k$-structure on $K/(\partial_x K)$ given by $$\partial[a]=\big[\tilde\partial(a)\big],\quad \partial\in D_k,$$
where, as above, $\tilde\partial\in D_K$ is any lift of $1\otimes\partial$ with respect to the structure map $D_K\to K\otimes_k D_k$.
\item\label{en:781}
Suppose, in addition to~\eqref{en:770}, that $K=k(x)$ and $\partial_x(x)=1$. Then, for any class in $K/(\partial_x K)$, there is a unique representative of the form
$$
\sum_{i=1}^n \frac{b_i}{x-c_i}, \quad b_i,c_i\in k,\,b_i\ne 0.
$$
Since
$$
\partial\left[\sum_{i=1}^n \frac{b_i}{x-c_i}\right]=\left[\sum_{i=1}^n \frac{\partial b_i}{x-c_i}\right],
$$
we obtain isomorphisms of $D_k$-modules
$$
\mbox{$\bigoplus\limits_{c\in k}k$}\cong K/(\partial_x K)\cong H^1(K/k).
$$
\end{enumerate}
\end{example}

\subsection{PPV extensions defined by integrals}\label{subsection-integrals}

As above, let $(K,D_K)$ be a parameterized differential field over $(k,D_k)$.
Given  $\omega\in\Omega_{K/k}$ with $\dd \omega=0$, the
equation $\dd y=\omega$ corresponds to a consistent system of
(non-homogenous) linear differential equations in the unknown $y$
$$
\delta(y)=\omega(\delta),\quad\delta\in D_{K/k}.
$$
Note that Lemma~\ref{lemma-invmod} remains valid if one assumes that $M$ is a $D_k$-finitely generated module over $k$ instead of being finite-dimensional over $k$. We use this generality in the following statement. Its special case appears in \cite[Lemma~2.3]{MichaelLAG}.

\begin{proposition}\label{prop-intGalois}
Let $L$ be a PPV extension of $K$ for the system of linear
differential equations that corresponds to the equation $\dd
y=\omega$, where $\omega\in \Omega_K$ with $\dd\omega=0$ (see above). Let $M$ be
the $D_k$-submodule in $H^1(K/k)$ generated by $[\omega]$ (see Section~\ref{subsection-GM}). Then
there is an isomorphism of linear $D_k$-groups (see Lemma~\ref{lemma-invmod} and the remark preceding the proposition)
$$
\Gal^{D_K}(L/K)\cong {\left(M^\vee\right)}^{D_k}.
$$
\end{proposition}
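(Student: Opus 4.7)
The plan is to realize $\Gal^{D_K}(L/K)$ as a $D_k$-subgroup of $\mathbb{G}_a$ via the "difference of two solutions" morphism, and then match it with $(M^\vee)^{D_k}\subset\mathbb{G}_a$ using the cyclic $D_k$-module structure of $M$.

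I would begin by choosing a solution $y\in A$ of $\dd y=\omega$, whose existence follows from $A$ being the PPV ring. The identity $\delta y=\omega(\delta)\in K$ for $\delta\in D_{K/k}$ implies that $A$ is generated as a $D_K$-algebra over $K$ by $y$, hence as a plain $K$-algebra by the family $\{Dy\}_{D\in U(D_k)}$, where lifts $\tilde\partial\in D_K$ of elements of $D_k$ are fixed once and for all. For $R\in\DAlg(k,D_k)$ and $\sigma$ in $\Gal^{D_K}(L/K)(R)$, the element $c_\sigma:=\sigma(y)-y$ is annihilated by $D_{K/k}$ (since $\sigma$ fixes $\omega(\delta)\in K$), and the PPV property $A^{D_{K/k}}=k$ forces $c_\sigma\in(R\otimes_k A)^{D_{K/k}}=R$; this yields a functorial morphism $\phi:\Gal^{D_K}(L/K)\to\mathbb{G}_a$. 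Because $\sigma$ commutes with $D_K$ and fixes $R\otimes_k K$, we have $\sigma(Dy)=Dy+D(c_\sigma)$ for every $D\in U(D_k)$, so $\sigma$ is determined by $c_\sigma$ and $\phi$ is injective.

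The core step is to identify the image of $\phi$ by analyzing the cyclic $D_k$-submodule $V\subset L/K$ generated by $[y]$. A straightforward induction from the Leibniz-type identity $\delta(\tilde\partial y)=L_{\tilde\partial}(\tilde\omega)(\delta)$ (valid because $[\delta,\tilde\partial]\in D_{K/k}$) yields $\delta(Dy)=(L_{\tilde D}\tilde\omega)(\delta)$ for every $D\in U(D_k)$, where $\tilde D$ denotes the corresponding product of Lie derivatives applied to a lift $\tilde\omega\in\Omega_K$ of $\omega$. Hence $Dy\in K$ iff the image of $L_{\tilde D}\tilde\omega$ in $\Omega_{K/k}$ is exact, which by the explicit construction of the Gauss--Manin connection recalled in Section~\ref{subsection-GM} is equivalent to $D[\omega]=0$ in $H^1(K/k)$; conversely, if $D[\omega]=0$, then $\delta(Dy-f)=0$ for an appropriate $f\in K$, and the PPV identity $L^{D_{K/k}}=k$ upgrades this to $Dy\in K$. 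Thus $[Dy]\mapsto D[\omega]$ is a well-defined $D_k$-module isomorphism $V\xrightarrow{\sim} M$; in particular $\mathrm{Ann}_{U(D_k)}([y])=\mathrm{Ann}_{U(D_k)}([\omega])$.

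Finally, the translation $\sigma_c:y\mapsto y+c$ extends to a $D_K$-algebra automorphism of $R\otimes_k A$ precisely when every $K$-algebra relation among $\{Dy\}$ is preserved; the linear relations $Dy=f_D$ for $D\in\mathrm{Ann}([y])$ are preserved iff $Dc=0$. Combining this with the explicit description of $(M^\vee)^{D_k}\subset\mathbb{G}_a$ following Lemma~\ref{lemma-invmod} (for the cyclic generator $[\omega]\in M$) identifies the image of $\phi$ with $(M^\vee)^{D_k}$. The main obstacle is ensuring that these linear relations generate the full relation ideal of $A$ over $K$ as a $D_k$-ideal, equivalently that the images in $A$ of a $k$-basis of a complement to $\mathrm{Ann}([y])$ in $U(D_k)$ are $K$-algebraically independent; this can be settled through the PPV torsor identity $A\otimes_K A\cong A\otimes_k k[\Gal^{D_K}(L/K)]$, which reduces the ring-theoretic question to the standard fact that every $D_k$-subgroup of $\mathbb{G}_a$ is cut out by linear $D_k$-polynomials in the coordinate, since $\mathbb{G}_a$ is a commutative unipotent group.
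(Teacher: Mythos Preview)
Your argument is correct and is, at its core, the same Kummer--Artin--Schreier style construction the paper uses: send $\sigma$ to $\sigma(y)-y$. The difference is one of packaging. You choose the single generator $[\omega]$, map $\Gal^{D_K}(L/K)$ into $\mathbb{G}_a$, and then identify the image with $(M^\vee)^{D_k}$ via the cyclic presentation $M\cong U(D_k)/\mathrm{Ann}_{U(D_k)}([\omega])$. The paper instead defines the map into $(M^\vee)^{D_k}$ directly and coordinate-freely by $\phi_g\colon [\eta]\mapsto g\bigl(\int\eta\bigr)-\int\eta$ for \emph{all} $[\eta]\in M$ at once, checks it is a $D_k$-morphism (this is exactly your computation $\delta(Dy)=(L_{\tilde D}\tilde\omega)(\delta)$ reorganised), and then proves surjectivity in one line: if $\phi$ were not onto, some nonzero $[\eta]\in M$ would satisfy $\phi_g([\eta])=0$ for all $g$, whence $\int\eta$ is Galois-invariant, hence lies in $K$, hence $[\eta]=0$. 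This replaces your final paragraph entirely: you do not need to establish algebraic independence of the $D_\alpha y$, nor invoke the torsor isomorphism or the linearity of $D_k$-subgroups of $\mathbb{G}_a$. In your language, the same observation reads: if $D$ lies in the defining ideal $J$ of the image in $\mathbb{G}_a$, then $Dy$ is Galois-invariant, hence $Dy\in K$, hence $D\in\mathrm{Ann}([y])$; so $J=\mathrm{Ann}([\omega])$ immediately. Two minor remarks: the description of $(M^\vee)^{D_k}\subset\mathbb{G}_a$ you cite after Lemma~\ref{lemma-invmod} is stated there only for $\dim_kD_k=1$ with $k_0\neq k$, so in the general case you should justify the embedding directly (evaluation at the cyclic generator $[\omega]$, which is straightforward); and note that the paper's formulation also handles the case $\dim_k M=\infty$ without extra bookkeeping, whereas your transcendence-degree style reasoning would need care there.
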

\begin{proof}
The proof is in the spirit of the Kummer and Artin--Schreier
theories, for example, see~\cite[\S VI.8]{LangAlgebra}. Let $R$ be a $D_k$-algebra. The natural map
$$
\alpha:R\otimes_k H^1(K/k)\to R\otimes_k H^1(L/k)
$$
is a morphism of $D_k$-modules. Since $L$ contains a solution of the equation $\dd y=\omega$, we have $\alpha([\omega])=0$. Therefore, for any $\eta\in R\otimes_k\Omega_{K/k}$ with
$\dd\eta=0$ and $[\eta]\in R\otimes_kM$, we have
$$
\alpha([\eta])=0.
$$
Thus, the equation $\dd y=\eta$ has a
solution in $R\otimes_k L$. Let $\int \eta\in R\otimes_k L$ denote any of these solutions. For each $g\in \Gal^{D_K}(L/K)(R)$, consider the map
$$
\mbox{$\phi_g:R\otimes_k M\to R,\quad [\eta]\mapsto g{\left(\int \eta\right)}-\int \eta.$}
$$
One checks that $\phi_g([\eta])$ is well-defined, that is, does not depend on the choices of $\eta$ and $\int\eta$ for a given $[\eta]$, and belongs~to
$$
R=(R\otimes_k L)^{D_{K/k}}.
$$
Further, $$\phi_g\in {\left(M^{\vee}\right)}^{D_k},$$ that is, $\phi_g$ is a $D_k$-map: for any $\partial\in D_k$ and its lift $\tilde\partial\in D_K$, we have
$$
\mbox{$\partial{\left(\phi_g([\eta])\right)}=\tilde\partial{\left(g(\int\eta)\right)}- \tilde\partial{\left(\int\eta\right)}=g\big(\tilde\partial(\int\eta)\big)-
\tilde\partial{\left(\int\eta\right)}=
g{\left(\int L_{\tilde\partial}\eta\right)}-\int L_{\tilde\partial}\eta=\phi_g(\partial[\eta])$},
$$
because the restriction of $\tilde\partial$ from $R\otimes_k L$ to $R$ is $\partial$, $g$ commutes with $\tilde\partial$, $\dd$ commutes with $L_{\tilde\partial}$, and by~\eqref{eq:GM} (see Section~\ref{subsection-GM}).
Also,  for all $g,h\in \Gal^{D_K}(L/K)(R)$, we have
$$
\mbox{$hg(\int\eta)-h(\int\eta)=g(\int\eta)-\int\eta$},
$$
as the right-hand side belongs to $R$ and is Galois invariant. Therefore, $$\phi_{hg}=\phi_h+\phi_g.$$
Summing up, we obtain a morphism of linear $D_k$-groups
$$
\phi:\Gal^{D_K}(L/K)\to {\left(M^\vee\right)}^{D_k}.
$$
Since $L$ is $D_K$-generated over $K$ by $\int\omega$, we see that $\phi$ is injective. Suppose that $\phi$ is not surjective. Then there is a non-zero element $[\eta]\in M$ such that, for any $D_k$-algebra $R$ over $k$ and any $g\in \Gal^{D_K}(L/K)(R)$, we have
$$
\phi_g([\eta])=0.
$$
Equivalently, for any $g\in \Gal^{D_K}(L/K)(R)$, we have
$$
\mbox{$g{\left(\int\eta\right)}=\int\eta$},
$$
whence $\int\eta\in K$ and $[\eta]=0$ in $H^1(K/k)$, which is a contradiction. Thus, $\phi$ is an isomorphism.
\end{proof}

The fact that the parameterized differential Galois group in Proposition~\ref{prop-intGalois} does not depend of the PPV extension corresponds directly to Remark~\ref{remark-nonconstGa}\eqref{it:740}.

\begin{example}~\label{examp-paramGG}
\hspace{0cm}
\begin{enumerate}
\item\label{remark-expliintGalois}
Assume that $D_k=k\cdot\partial_t$.
Let $0\ne D$ be a linear $\partial_t$-operator with coefficients in $k$ of the smallest order such that $$D[\omega]=0\ \ \text{in}\ \ H^1(K/k).$$ If there is no non-zero $D$ with $D[\omega]=0$, then we put $D:=0$. Proposition~\ref{prop-intGalois} and the discussion following Lemma~\ref{lemma-invmod} imply that $\Gal^{D_K}(L/K)$ is isomorphic to
the $D_k$-subgroup in $\G_a$ given by the equation
$$
Du=0,\ \ u\in\G_a.
$$
\item
We use the notation of Example~\ref{examp-GM}~\eqref{en:781}. By Proposition~\ref{prop-intGalois}, the parameterized differential Galois group of the equation $\dd y=\omega$ with
$$
\omega=\sum_{i=1}^n \frac{b_i}{x-c_i}\cdot\omega_x, \quad b_i,c_i\in k,\,b_i\ne 0,
$$
is isomorphic to ${(\G^n_a)}^c$. This is also explained in~\cite[Example~7.1]{PhyllisMichael}.
\end{enumerate}
\end{example}

Surprisingly, the description of the parameterized differential Galois group given in Proposition~\ref{prop-intGalois} allows to prove the existence of a PPV extension. For simplicity, suppose that $D_k=k\cdot\partial_t$ and let $D$ be as in Example~\ref{examp-paramGG}\eqref{remark-expliintGalois}. Let $\tilde\partial_t\in D_K$ be a lift of $1\otimes \partial_t$ with respect to the structure map $D_K\to K\otimes_k D_k$ and let a linear $D_K$-operator $\widetilde{D}$ be the corresponding lift of $D$. Since $D[\omega]=0$, there is $a\in K$ such that
\begin{equation}\label{eq:GM2}
L_{\widetilde{D}}(\omega)=\dd a
\end{equation}
by~\eqref{eq:GM} and the preceding discussion (see Section~\ref{subsection-GM}). An equation similar to~\eqref{eq:GM2} was considered in~\cite{MichaelLAG} and~\cite{CKS2012}. Consider the $D_K$-algebra
\begin{equation}
R:=K\{y\}\big/{\big(\dd y-\omega,\widetilde{D}y-a\big)_{D_K}},
\end{equation}
where $(\Sigma)_{D_K}$ denotes the $D_K$-differential ideal generated by $\Sigma$, and $\dd y-\omega$ means the collection
$$
\delta(y)-\omega(\delta),\quad\delta\in D_{K/k}.
$$
Note that $R$ is isomorphic as a $K$-algebra to the ring of polynomials over $K$ (possibly, of countably many variables). In particular, $R$ is a domain.

\begin{proposition}\label{prop-PPV}
In the above notation, the field $L:=\Frac(R)$ is a PPV extension of $K$ for the equation $\dd y=\omega$.
\end{proposition}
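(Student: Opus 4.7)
The plan is to verify the three defining properties of a PPV extension for $L := \Frac(R)$: that $L$ is a $D_K$-field over $K$; that $L$ is $D_K$-generated over $K$ by a solution of $\dd y = \omega$; and that $L^{D_{K/k}} = k$. The first two conditions are immediate from the construction: $R$ is a domain (as noted just before the proposition), so $L = \Frac(R)$ is a field inheriting the $D_K$-structure from $R$ via the quotient rule; and $y \in R \subset L$ solves $\dd y = \omega$ and $D_K$-generates $R$, hence $L$, over $K$.

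For the main condition $L^{D_{K/k}} = k$, the plan is to reduce to the case where the field of constants is $D_k$-closed. Let $\bar{k}$ be a $D_k$-closure of $k$, and set $\bar{K} := \Frac(K \otimes_k \bar{k})$ with its natural $D_{\bar{K}}$-structure and $\bar{R} := R \otimes_K \bar{K}$. By~\cite[Theorem~3.5(1)]{PhyllisMichael}, a PPV extension $\bar{L}$ of $\bar{K}$ for $\dd y = \omega$ exists, with PPV ring $\bar{A} \subset \bar{L}$ satisfying $\bar{A}^{D_{\bar{K}/\bar{k}}} = \bar{k}$. Picking a solution $y' \in \bar{A}$ of $\dd y' = \omega$, equation~\eqref{eq:GM2} implies that $\widetilde{D}y' - a$ is a $D_{\bar{K}/\bar{k}}$-constant, hence lies in $\bar{k}$. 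By $D_k$-closedness of $\bar{k}$, one can solve $Dc = \widetilde{D}y' - a$ for some $c \in \bar{k}$; then $y' - c \in \bar{A}$ satisfies both $\dd(y'-c) = \omega$ and $\widetilde{D}(y'-c) = a$, yielding a $D_{\bar{K}}$-homomorphism $\phi \colon \bar{R} \to \bar{A}$ sending $y \mapsto y' - c$. If $n$ denotes the order of $D$, a transcendence-degree comparison---both rings have transcendence degree $n$ over $\bar{K}$: the former because $\bar{R}$ is a polynomial ring in $n$ variables over $\bar{K}$, the latter by Proposition~\ref{prop-intGalois} combined with the equality of transcendence degree and Galois group dimension for PV rings---together with the UFD structure of $\bar{R}$, shows that $\phi$ is an isomorphism. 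Consequently $\Frac(\bar{R}) \cong \bar{L}$ with $\Frac(\bar{R})^{D_{\bar{K}/\bar{k}}} = \bar{k}$.

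To descend back to $L$, the inclusion $R \hookrightarrow \bar{R}$ induces $L = \Frac(R) \hookrightarrow \Frac(\bar{R}) = \bar{L}$. Any $f \in L^{D_{K/k}}$ is then annihilated by $D_{\bar{K}/\bar{k}} = \bar{K} \otimes_K D_{K/k}$, so $f \in \bar{L}^{D_{\bar{K}/\bar{k}}} = \bar{k}$, i.e., $f \in L \cap \bar{k}$. Since $L$ is a purely transcendental extension of $K$ (as $R$ is a polynomial ring over $K$), $K$ is algebraically closed in $L$, so $L \cap \bar{K} = K$; and $K \cap \bar{k} = k$ because $\bar{k}/k$ is linearly disjoint from $K/k$. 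Combining, $L \cap \bar{k} = k$, and thus $L^{D_{K/k}} = k$. The main technical obstacle will be the verification that $\phi$ is an isomorphism: surjectivity requires that $\bar{A}$ is $D_{\bar{K}}$-generated by $y' - c$, with the relation $\widetilde{D}(y'-c) = a$ expressing all higher $\tilde{\partial}_t$-derivatives in terms of the first $n$; injectivity then follows from a surjection between domains of equal Krull dimension with a UFD source. The degenerate case $D = 0$, where the Galois group is infinite-dimensional, requires an extra argument with countable transcendence degree and a direct verification that $\bar{R}$ is $D_{\bar{K}}$-simple.
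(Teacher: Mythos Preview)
Your overall strategy matches the paper's: reduce to the case of a differentially closed constant field, build a $D_{\bar K}$-homomorphism from $\bar R$ to the PPV ring over $\bar K$, show it is an isomorphism, and then descend. The two proofs differ in the tactics for the last two steps. For the isomorphism, the paper observes that both $\Spec(\bar R)$ and $\Spec(\bar A)$ are torsors under the same group $G$ (the base change to $\bar K$ of the Galois group described in Proposition~\ref{prop-intGalois}), so that a closed embedding of $G$-torsors is automatically an isomorphism; you instead compare Krull dimensions. For the descent, the paper invokes the identity $L_l^{D_{K/k}}=l\otimes_k L^{D_{K/k}}$ from~\cite[Corollary~8.9]{GGO}, whereas you argue by direct intersection inside $\bar L$. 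Your route is more elementary and avoids the torsor language, at the cost of having to treat the case $D=0$ (infinite transcendence degree) separately; the paper's torsor argument handles both cases uniformly.

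One logical slip to fix: the implication ``$K$ is algebraically closed in $L$, so $L\cap\bar K=K$'' is not valid as stated, since $\bar K/K$ is transcendental (a differential closure $\bar k$ is far from algebraic over $k$). What you actually need is that $L=K(y,\tilde\partial_t y,\dots)$ and $\bar K$ are linearly disjoint over $K$ inside $\bar L=\Frac(\bar R)$; this follows because the generators $y,\tilde\partial_t y,\dots$ remain algebraically independent over $\bar K$ (as $\bar R$ is a polynomial ring over $\bar K$), so any element of $K(y,\dots)\cap\bar K$ is forced into $K$ by comparing coefficients. With that correction, and with a bit more care justifying that $\bar A$ really has Krull dimension $n$ (i.e.\ that the PPV ring here is finitely generated over $\bar K$, which ultimately comes from the Galois group being algebraic of dimension $n$), your argument goes through.
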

\begin{proof}
Let $l$ be a $D_k$-field over $k$ and suppose that the proposition is true for the parameterized field $$K_l:=\Frac(l\otimes_k K)$$ over $l$ (see~\cite[\S8.2]{GGO} for the extension of $D_{K/k}$-constants in parameterized differential fields). That is, suppose that
$$
L_l:=\Frac(l\otimes_k R)
$$
is a PPV extension of $K_l$ for the equation $\dd y =\omega$. Therefore, $L_l^{D_{K/k}}=l$. On the other hand, by~\cite[Corollary 8.9]{GGO}, we have that $$L_l^{D_{K/k}}=l\otimes_k L^{D_{K/k}},$$ whence $L^{D_{K/k}}=k$ and we obtain the needed result for $L$. Thus, we may assume that $(k,D_k)$ is differentially closed.

Now suppose that the proposition is true for $a$ and let $a'\in K$ be another element such that
$$
L_{\widetilde{D}}(\omega)=\dd a'.
$$
Then $a'=a+b$ with $b\in k$. Since $(k,D_k)$ is differentially closed, there is $c\in k$ such that $Dc=b$. This defines an isomorphism
$$
R\to K\{y\}\big/{\big(\dd y-\omega,\widetilde{D}y-a'\big)}_{D_K},\quad y\mapsto y-c.
$$
Thus, it is enough to show that there is at least one $a\in K$ with $L_{\widetilde D}(\omega)=\dd a$ such that the proposition is true for $a$.

Again, since $(k,D_k)$ is differentially closed, there is a PPV extension $E$ of $K$ for the equation $\dd y=\omega$ by~\cite[Theorem~3.5(1)]{PhyllisMichael}. Let $z\in E$ be a solution of the latter equation. Consider the subring $S$ in $E$ that is $D_K$-generated by $z$. We have that
$$
L_{\widetilde{D}}(\omega)=\dd{\big(\widetilde{D}z\big)}.
$$
Since $E^{D_{K/k}}=k$, we see that $\widetilde{D}z\in K$. Put $$a:=\widetilde{D}z.$$ Then we obtain a surjective $D_K$-morphism $f:R\to S$ sending $y$ to $z$.

By Proposition~\ref{prop-intGalois} and Example~\ref{examp-paramGG}\eqref{remark-expliintGalois}, $\Gal^{D_K}(E/K)$ is isomorphic to the $D_k$-subgroup in $\G_a$ given by 
$$
Du=0,\ \ u\in\G_a.
$$
It follows from the proof of Proposition~\ref{prop-intGalois} that the action of $\Gal^{D_K}(E/K)$ on $E$ is given by the formula
$$
u:z\mapsto z+u.
$$
Let $G$ be the extension of scalars from $k$ to $K$ of $\Gal^{D_K}(E/K)$ as a (pro-)algebraic group over $k$. It follows from the PPV theory that $\Spec(S)$ is a torsor under $G$ over $K$ (see~\cite[\S9.4]{PhyllisMichael}). By the explicit description of $R$, $\Spec(R)$ is also a torsor under $G$ and $f$ corresponds to a closed embedding $\Spec(S)\to\Spec(R)$ of $G$-torsors. We conclude that $f$ is an isomorphism, which proves the proposition for the above choice of~$a$.
\end{proof}

\section{Isomonodromic differential equations}\label{sec:IDE}

In this section, we show how Proposition~\ref{prop-main} can be applied to isomonodromic parameterized linear differential equations. The main results here are in Theorem~\ref{thm:54} and Theorem~\ref{theor-main}. Section~\ref{sec:anal} provides an analytic interpretation of our results. The main illustrating examples are in Section~\ref{subsection-exampisom} (see also Example~\ref{ex:replaceBis}).

\subsection{Main results}\label{sec:61}

Let $(K,D_K)$ be a parameterized differential field over $(k,D_k)$ and $N$ be a finite-dimensional $D_{K/k}$-module over~$K$.

\begin{definition}\label{defin-isomon}
We say that $N$ is {\it isomonodromic} if there is a $D_K$-structure on $N$ such that its restriction from $D_K$ to $D_{K/k}$ is equal to the initial $D_{K/k}$-structure on $N$.
\end{definition}

This is called complete integrability in~\cite[Definition~3.8]{PhyllisMichael}, but we preferred to use the terminology slightly more common in differential equations for this notion (see also Section~\ref{sec:anal}).

\begin{proposition}\label{prop-2}
A finite-dimensional $D_{K/k}$-module $N$ is isomonodromic if and
only if there is a $D_k$-structure on $N$ in $\DMod{\left(K,D_{K/k}\right)}$.
\end{proposition}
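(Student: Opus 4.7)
The plan is to translate the abstract categorical condition ``$D_k$-structure in $\DMod(K,D_{K/k})$'' into the concrete differential-equations condition ``extension of the $D_{K/k}$-action to a $D_K$-action restricting correctly''. Both sides concern the category $\Cat:=\DMod(K,D_{K/k})$, on which the canonical $D_k$-structure has already been constructed in~\cite[Theorem~5.1]{GGO}; the proof consists of unravelling that construction on an individual object $N$.

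First, I would recall from~\cite[Theorem~5.1]{GGO} the explicit description of the Atiyah functors $\At^1_{\Cat}$ and $\At^2_{\Cat}$ for $\DMod(K,D_{K/k})$, built from the structure map $D_K\to K\otimes_k D_k$ whose kernel is $D_{K/k}$. Under this description, a $k$-linear section $s_N$ of $\pi_N:\At^1_{\Cat}(N)\to N$ amounts to giving, for each $\partial\in D_k$, an additive map $\tilde\partial:N\to N$ which lifts a chosen lift of $1\otimes\partial$ in $D_K$ in the sense that $\tilde\partial(fn)=\tilde\partial_K(f)n+f\tilde\partial(n)$ for $f\in K$, $n\in N$, and whose commutator with every $\delta\in D_{K/k}$ equals the action of $[\tilde\partial_K,\delta]\in D_{K/k}$ on $N$ via the given $D_{K/k}$-module structure. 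Equivalently, $s_N$ is exactly a $K$-linear lift $D_k\to\Der_{\Z}(N,N)/D_{K/k}$-compatible action, that is, an extension of the $D_{K/k}$-action on $N$ to an action of a splitting of $D_K\twoheadrightarrow K\otimes_k D_k$ as $k$-vector spaces.

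Next, I would analyse the $D_k$-structure condition that the image of $\At^1_{\Cat}(s_N)\circ s_N$ lie in $\At^2_{\Cat}(N)$. By~\eqref{eq:441} together with the equality of the two compositions~\eqref{eq:446} and~\eqref{eq:452}, the target $\At^2_{\Cat}(N)$ is cut out precisely by the symmetry condition on the two-fold iterations of the lifts; unwinding this in the explicit model above shows that it translates into the Leibniz-compatible commutation relation $[\tilde\partial_1,\tilde\partial_2]=\widetilde{[\partial_1,\partial_2]}$ on $N$ for all $\partial_1,\partial_2\in D_k$, together with the already-present compatibility against $D_{K/k}$. Combining the two packages of relations, the lifts $\tilde\partial$ together with the action of $D_{K/k}$ define a $K$-linear Lie-ring map $D_K\to\Der_{\Z}(N,N)$ which extends the given $D_{K/k}$-structure, i.e.\ a $D_K$-module structure on $N$ restricting to the original one on $D_{K/k}$. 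This is exactly isomonodromicity in the sense of Definition~\ref{defin-isomon}, and the correspondence is clearly bijective.

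The main obstacle is bookkeeping: matching the abstract defining property of $\At^2_{\Cat}$ on the category $\DMod(K,D_{K/k})$ with the explicit commutator conditions that characterise a $D_K$-module structure. This is not deep, but requires working inside the construction of~\cite[Theorem~5.1]{GGO} carefully so that the two kinds of integrability — the categorical one via $\At^2_{\Cat}$ and the concrete Leibniz/bracket identities defining a $D_K$-module — are identified. Once this identification is in hand, the proposition is essentially a reformulation.
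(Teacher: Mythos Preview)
Your proposal is correct in outline, but the paper's argument is shorter because it sidesteps exactly the bookkeeping you flag as ``the main obstacle''. Rather than unpacking $\At^2_{\Cat}$ explicitly and matching the categorical integrability condition to commutator identities by hand, the paper uses the fact that the forgetful functor $\DMod(K,D_{K/k})\to\Vect(K)$ is a \emph{differential} functor (\cite[Theorem~5.1]{GGO}). Concretely, the explicit description of $\At^1(N)$ exhibits it as a subobject of $\At^1_K(N)$; given a $D_k$-structure $s_N$ in $\DMod(K,D_{K/k})$, composing with this inclusion and applying differentiality immediately yields a $D_K$-structure on $N$ in $\Vect(K)$, i.e.\ a $D_K$-module structure (Example~\ref{ex:32}), and the explicit form of $\At^1(N)$ forces it to extend the given $D_{K/k}$-action. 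Conversely, a $D_K$-module structure extending the $D_{K/k}$-structure gives a section $N\to\At^1_K(N)$ which lands in $\At^1(N)$ by the same explicit description; the $\At^2$-condition then holds because it already holds in $\Vect(K)$ and the forgetful functor is faithful and differential.

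So both routes are valid reformulations, but the paper trades your direct verification of the $\At^2$-condition in $\DMod(K,D_{K/k})$ for an appeal to the known case $\Cat=\Vect(K)$ via functoriality. Your approach has the merit of being self-contained and makes the commutator identities visible; the paper's buys brevity by reusing the machinery already set up in \cite{GGO}.
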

\begin{proof}
We use facts about the Atiyah functor $\At^1$ in $\DMod\left(K,D_{K/k}\right)$ that can be found in~\cite[\S5.1]{GGO}. We have the equality of sets (see \cite[eq.~(17)]{GGO})
$$
\At^1(N)={\left\{n\otimes
1+\sum\nolimits_i n_i\otimes\omega_i\in N\oplus\left(N\otimes_K\Omega_K\right)\:\big|\:\forall \delta\in D_{K/k},\:
\delta(n)=\sum\nolimits_i\omega_i(\delta)n_i\right\}}
$$
(we are not specifying  $K$-linear and $D_K$ structures on $\At^1(N)$ here).
Further,
$$
\At^1(N)\subset \At^1_K(N),$$
where $\At^1_K$ denotes the Atiyah functor in $\Vect(K)$. Assume that there is a $D_k$-structure $s_N:N\to \At^1(N)$ on $N$ in $\DMod{\left(K,D_{K/k}\right)}$. Since the forgetful functor $$\DMod{\left(K,D_{K/k}\right)}\to \Vect(K)$$ is differential (see~\cite[Theorem~5.1]{GGO}), the composition
$$
\begin{CD}
N@>s_N>>\At^1(N)@>>>\At^1_K(N)
\end{CD}
$$
defines a $D_K$-structure on $N$ that extends  the given $D_{K/k}$-structure. Conversely, assume that $N$ is isomonodromic. Since the $D_K$-structure extends the given $D_{K/k}$-structure, we see that the map $N\to \At^1_K(N)$ factors through $\At^1(N)$ by the construction of $\At^1$, which gives the needed splitting $s_N$.
\end{proof}

For each $\partial\in D_k$, we have a parameterized differential field $(K,D_{K,\partial})$ over $(k,D_k)$ with $D_{K,\partial}$ being the preimage of $K\otimes\partial$ with respect to the structure map $D_K\to K\otimes_k D_k$. By definition, a finite-dimensional $D_{K/k}$-module $N$ is $\partial$-isomonodromic if and only if it is isomonodromic over $(K,D_{K,\partial})$. By Proposition~\ref{prop-2}, this is equivalent to the existence of a $\partial$-structure on $N$ in $\DMod(K,D_{K/k})$. Note that we have a morphism of differential fields $(k,D_k)\to (K,D_K)$ (while there is no fixed $D_k$-field structure on $K$) and the forgetful functor $\DMod(K,D_{K/k})\to\Vect(K)$ is a faithful differential functor (see~\cite[Theorem~5.1]{GGO}). Thus, combining Proposition~\ref{prop-main},~\ref{prop-connequiv}, and~\ref{prop-2}, we obtain:

\begin{theorem}\label{thm:54}
Suppose that $(k,D_k)$ is filtered-linearly closed. Then $N$ is isomonodromic if
and only if there is a (possibly, non-commuting) basis $\partial_1,\ldots,\partial_d$ in $D_k$ over $k$ such that $N$ is $\partial_i$-isomonodromic for all $i$.
\end{theorem}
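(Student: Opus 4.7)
The plan is to assemble Theorem~\ref{thm:54} from the three results the authors just developed: Proposition~\ref{prop-2}, Proposition~\ref{prop-connequiv}, and Proposition~\ref{prop-main}. The forward direction is essentially formal: by Proposition~\ref{prop-2}, isomonodromicity of $N$ provides a $D_k$-structure $s_N$ on $N$ in $\Cat:=\DMod(K,D_{K/k})$, and composing $s_N$ with the canonical quotient $\At^1_{\Cat}(N)\to\At^1_{\Cat,\partial_i}(N)$ produces a $\partial_i$-structure for every basis vector $\partial_i$; hence by the $\partial_i$-version of Proposition~\ref{prop-2}, the module $N$ is $\partial_i$-isomonodromic for all $i$ (and, in fact, for any $\partial\in D_k$).

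For the nontrivial direction, I would first translate the hypothesis into the categorical language. Applied to the parameterized differential field $(K,D_{K,\partial_i})$ over $(k,k\cdot\partial_i)$, Proposition~\ref{prop-2} says that $\partial_i$-isomonodromicity of $N$ is equivalent to the existence of a $\partial_i$-structure, hence in particular of a $\partial_i$-connection, on $N$ as an object of $\Cat$. Since the $\partial_i$ form a basis of $D_k$ over $k$, Proposition~\ref{prop-connequiv} then assembles these individual $\partial_i$-connections into a genuine $D_k$-connection on $N$ in $\Cat$.

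At this point I would invoke Proposition~\ref{prop-main} to upgrade the $D_k$-connection to a $D_k$-structure. To apply it, one needs to check the two running hypotheses: that $(k,D_k)$ is filtered-linearly closed, which is assumed, and that there is a morphism of differential fields $(k,D_k)\to(K,D_K)$ together with a faithful differential functor $\Cat\to\Vect(K)$. The first comes from the parameterized structure of $(K,D_K)$ over $(k,D_k)$; the second is the forgetful functor $\DMod(K,D_{K/k})\to\Vect(K)$, which is faithful by construction and differential by \cite[Theorem~5.1]{GGO} (as already noted just before the theorem). Thus Proposition~\ref{prop-main} promotes the $D_k$-connection on $N$ in $\Cat$ to a $D_k$-structure on $N$ in $\Cat$, and one last application of Proposition~\ref{prop-2} concludes that $N$ is isomonodromic.

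The real content is entirely contained in the three preceding propositions; the main step to be careful about is the last one, namely that $\DMod(K,D_{K/k})$ meets the hypotheses of Proposition~\ref{prop-main}. I do not expect any genuine obstacle here beyond bookkeeping: the filtered-linearly closed assumption is used only through Proposition~\ref{prop-main}, and its necessity is precisely what Example~\ref{examp-isomon} will demonstrate.
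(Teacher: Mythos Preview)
Your proposal is correct and follows essentially the same route as the paper: the authors state that Theorem~\ref{thm:54} is obtained by combining Proposition~\ref{prop-2}, Proposition~\ref{prop-connequiv}, and Proposition~\ref{prop-main}, after noting (exactly as you do) that the forgetful functor $\DMod(K,D_{K/k})\to\Vect(K)$ is a faithful differential functor and that the parameterized structure supplies the morphism of differential fields $(k,D_k)\to(K,D_K)$. Your write-up simply unpacks this one-line proof with more care about the bookkeeping.
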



\subsection{Explicit approach}\label{sec:explicit}

Let us explain Theorem~\ref{thm:54} more explicitly in the case mentioned in the introduction: $\dim_K(D_{K/k})=1$ and $\dim_k(D_k)=d$. More precisely, let $\partial,\partial_1,\ldots\partial_d$ denote commuting derivations from $K$ to itself, $k=K^{\partial}$, and put
$$
D_K:=K\cdot\partial\oplus K\cdot\partial_1\oplus\ldots\oplus K\cdot\partial_d\quad\text{and}\quad
D_k:=k\cdot\partial_1\oplus\ldots\oplus k\cdot\partial_d.
$$
Choosing a basis in a finite-dimensional $D_{K/k}$-module $N$ over $K$, we obtain a correspondence between differential structures on $N$ and matrices with entries from $K$. Let $\Mn_n(K)$ denote the space of $(n\times n)$-matrices with entries in $K$. A particular case of Theorem~\ref{thm:54} reads as follows.

\begin{theorem}\label{theor-expl}
Suppose that for all $i$, $1\Le i\Le d-1$, all consistent systems of linear differential equations over $k$ that involve only the derivation $\partial_{1},\ldots,\partial_{i}$ have a fundamental solution matrix with entries in $k$. Let $A\in \Mn_n(K)$ and suppose that there exist matrices $B_1,\ldots,B_d\in \Mn_n(K)$ that satisfy
\begin{equation}\label{eq:Bi}
\partial_{i}A-\partial B_i=[B_i,A]
\end{equation}
for all $i$, $1\Le i\Le d$. Then there exist matrices $A_1,\ldots,A_d\in
\Mn_n(K)$ such that
\begin{equation}\label{eq:Ai1}
\partial_{i}A-\partial A_i=[A_i,A]
\end{equation}
for all $i$, $1\Le i\Le d$, and, for all $i,j$, $1\Le i,j\Le d$, we have
\begin{equation}\label{eq:Ai2}
\partial_{i}A_j-\partial_{j} A_i=[A_i,A_j].
\end{equation}
\end{theorem}

The condition on the field $k$ and derivations $\partial_1,\ldots\partial_d$ from Theorem~\ref{theor-expl} corresponds to the condition that $(k,D_k)$ is filtered-linearly closed with respect to the basis $\partial_1,\ldots,\partial_d$. The explicit proof below can be used in designing algorithms.

\begin{proof}
What follows is an explicit version of the proof of Lemma~\ref{lemma-CDGA}. Similarly, we use induction on $d$, with the case $d=1$ being trivial. Let us make the inductive step from $d-1$ to $d$, $d\Ge 2$. By the inductive hypothesis, there are matrices $A_1,\ldots, A_{d-1}\in\Mn_n(K)$ that satisfy both~\eqref{eq:Ai1} and~\eqref{eq:Ai2}.
We claim that there exists $C\in \Mn_n(K)$ such that the matrices
\begin{equation*}\label{eq:Di}
(A_1,\ldots,A_{d-1},B_d+C)
\end{equation*}
satisfy both~\eqref{eq:Ai1} and~\eqref{eq:Ai2}. In order to show that $B_d+C$ satisfies~\eqref{eq:Ai1} and~\eqref{eq:Ai2}, we need to show the equalities
\begin{equation}\label{eq:55}
\partial(B_d+C)-\partial_{d}A=[A,B_d+C]
\end{equation}
and
\begin{equation}\label{eq:6}
\partial_{i}(B_d+C)-\partial_{d}A_i=[A_i,B_d+C]
\end{equation}
for all $i$, $1\Le i \Le d-1$.
Expanding the left-hand side of~\eqref{eq:55} using~\eqref{eq:Bi}, we see that
\begin{align*}
\partial(B_d+C)&=\partial B_d+\partial C=\partial_{d} A+[A,B_d]+\partial C.
\end{align*}
Consider
\begin{equation}\label{eq:main}
\partial Z=[A,Z]
\end{equation}
as a matrix linear differential equation in $(n\times n)$-matrix $Z$.
Rearranging the terms in~\eqref{eq:6}, we see that we need to find $C\in
\Mn_n(K)$ such that $C$ satisfies~\eqref{eq:main}
and the following condition is satisfied:
\begin{equation}\label{eq:4}
\partial_{i} C+[C,A_i]=\partial_{d} A_i-\partial_{i} B_d+[A_i,B_d]
\end{equation}
for all $i$, $1\Le i\Le d-1$.

We now show that the right-hand side of~\eqref{eq:4} satisfies~\eqref{eq:main}. Indeed, we have:
\begin{align*}
&\partial\big(\partial_{d} A_i-\partial_{i}B_d+[A_i,B_d]\big)= \partial_{d}(\partial A_i)-\partial_{i}(\partial B_d)+[\partial A_i,B_d]+[A_i,\partial B_d]=\\
&=\partial_{d}(\partial_{i} A+[A,A_i])-\partial_{i}(\partial_{d}A+[A,B_d])+[\partial_{i}(A)+[A,A_i],B_d]+[A_i,\partial_{d}A+[A,B_d]]=\\
&=\partial_{d}([A,A_i])-\partial_{i}([A,B_d])+[\partial_{i}(A),B_d]+[[A,A_i],B_d]+[A_i,\partial_{d}(A)]+[A_i,[A,B_d]]=\\
&=[A,\partial_{d}(A_i)]-[A,\partial_{i}(B_d)]+[A,[A_i,B_d]],
\end{align*}
as desired. Here, we have used~\eqref{eq:Bi} and~\eqref{eq:Ai1} for the second equality.

For any matrix $Z\in \Mn_n(K)$ satisfying~\eqref{eq:main}, we now show that for all $i$, $1\Le i\Le d-1$, the matrix
$$
\partial_{i}Z+[Z,A_i]
$$
also satisfies~\eqref{eq:main}. Indeed, we have:
\begin{align*}
\partial\left(\partial_{i}Z+[Z,A_i]\right)&=\partial_{i}\left(\partial Z \right)+\left[\partial Z,A_i\right]+\left[Z,\partial A_i\right]=\\
&=\partial_{i}([A,Z])+[[A,Z],A_i]+\left[Z,\partial_{i}A+[A,A_i]\right]=\\
&=[\partial_{i} A,Z]+[A,\partial_i Z]+[[A,Z],A_i]+\left[Z,\partial_{i}A\right]+[[Z,A],A_i]+[A,[Z,A_i]]=\\
&=\left[A,\partial_{i}Z+[Z,A_i]\right],
\end{align*}
as desired. Here, we have used~\eqref{eq:main} and~\eqref{eq:Ai1} for the second equality as well.

Let $V$ denote the set of all matrices in $\Mn_n(K)$ satisfying~\eqref{eq:main}. Then $V$ is a finite-dimensional $k$-vector space. Also, consider the maps
$$
\Phi_i:\Mn_n(K)\to \Mn_n(K),\quad Z\mapsto \partial_iZ+[Z,A_i],
$$
which are given by the left-hand side of~\eqref{eq:4}. What we have shown so far is that for all $i$, $1\Le i\Le d-1$, the right-hand side of~\eqref{eq:4} belongs to $V$ and that the maps $\Phi_i$ preserve the subspace $V\subset\Mn_n(K)$. Moreover, the maps $\Phi_i$ define a $(\partial_1,\ldots,\partial_{d-1})$-module structure on $V$, that is, these maps satisfy the Leibniz rule and the integrability conditions $[\Phi_i,\Phi_j]=0$ for all $i,j$, $1\Le i,j\Le d-1$. Indeed, we have:
\begin{align*}
(\Phi_i\Phi_j)(Z)&=\partial_i(\partial_jZ+[Z,A_j])+[\partial_jZ+[Z,A_j],A_i]=\\
&
=\partial_i\partial_jZ+[\partial_iZ,A_j]+[Z,\partial_iA_j]+[\partial_jZ,A_i]+
[[Z,A_j],A_i].
\end{align*}
Subtracting the similar expression for $(\Phi_j\Phi_i)(Z)$, we obtain
\begin{align*}
[\Phi_i,\Phi_j](Z)&=[Z,\partial_iA_j]-[Z,\partial_jA_i]+[[Z,A_j],A_i]-[[Z,A_i],A_j]=\\
&=
[Z,\partial_iA_j-\partial_jA_i-[A_i,A_j]],
\end{align*}
which vanishes by~\eqref{eq:Ai2} for $1\Le i,j\Le d-1$.

Let $y_i\in V$, $1\Le i\Le d-1$, denote the right-hand side of~\eqref{eq:4}:
$$
y_i:=\partial_dA_i-\partial_iB_d+[A_i,B_d].
$$
By construction, finding $C$ that satisfies~\eqref{eq:main} and~\eqref{eq:4} is equivalent to finding $y\in V$ that satisfies
\begin{equation}\label{eq:y}
\Phi_i(y)=y_i
\end{equation}
for all $i$, $1\Le i\Le d-1$. This system of non-homogenous linear differential equations is consistent if and only if one has
$$
\Phi_i(y_j)=\Phi_j(y_i)
$$
for all $i,j$, $1\Le i,j\Le d-1$. The latter is again implied by~\eqref{eq:Ai2}. Indeed, we have:
$$
\Phi_i(y_j)=\partial_i\partial_dA_j-\partial_i\partial_jB_d+
[\partial_iA_j,B_d]+[A_j,\partial_iB_d]+[\partial_dA_j,A_i]-[\partial_jB_d,A_i]+[[A_j,B_d],A_i].
$$
Subtracting the similar expression for $\Phi_j(y_i)$, we obtain
\begin{align*}
\Phi_i(y_j)-\Phi_j(y_i)&=\partial_i\partial_dA_j-\partial_j\partial_dA_i+
[\partial_iA_j,B_d]-[\partial_jA_i,B_d]+
[\partial_dA_j,A_i]-[\partial_dA_i,A_j]+[[A_j,B_d],A_i]-[[A_i,B_d],A_j]=\\
&=\partial_d(\partial_iA_j-\partial_jA_i-[A_i,A_j])+[\partial_iA_j-\partial_jA_i-[A_i,A_j],B_d],
\end{align*}
which vanishes by~\eqref{eq:Ai2} for $1\Le i,j\Le d-1$.

A consistent system of non-homogenous linear differential equations is equivalent to a consistent system of homogenous linear differential equations (doubled in size).
Therefore, by the hypothesis of the theorem, there exists $y\in V$ satisfying~\eqref{eq:y}, which implies the existence of $C\in \Mn_n(K)$ satisfying~\eqref{eq:main} and~\eqref{eq:4}. Thus, the matrices
$$
(A_1,\ldots,A_{d-1}, B_d+C)
$$
satisfy both~\eqref{eq:Ai1} and~\eqref{eq:Ai2}.
\end{proof}

\begin{example}\label{ex:replaceBis}
We will see that, in general, the $A_i$'s as in Theorem~\ref{theor-expl} have to be different from the original $B_i$'s. Assume that there is an element $t\in k$ such that $\partial_1(t)=1$ and $\partial_i(t)=0$ for all $i$, $2\Le i\Le d$ (e.g., $k$ is the field of rational functions $k=\Q(t_1,\ldots,t_d)$, $\partial_i=\partial_{t_i}$, and $t=t_1$). Let $d \Ge 2$ and suppose that $(n\times n)$-matrices
$A, A_1, \ldots A_d$ with entries in $K$ satisfy all the integrability conditions~\eqref{eq:Ai1} and~\eqref{eq:Ai2}.
Define
$$
B_1 := A_1,\ \ldots,B_{d-1} := A_{d-1},\
B_d := A_d + \diag(t),
$$
where $\diag(t)$ denotes the diagonal $(n\times n)$-matrix with $t\in k$ on the diagonal.
Then the new set of matrices $A, B_1,\ldots, B_d$ will still satisfy~\eqref{eq:Bi} but will not satisfy the integrability condition for the pair of derivations $\partial_{1}$ and $\partial_{d}$. Indeed, $\partial_1B_d-\partial_dB_1$ is the identity matrix, while $[B_1,B_d]$ vanishes.
\end{example}

\subsection{Relation to parameterized differential Galois groups}
Let $\U$ be a $D_k$-closure of $k$, $K_{\U}:=\Frac(\U\otimes_k K)$.
By~\cite[Proposition~8.11]{GGO}, we have
$$
\U\otimes_k\DMod(K,D_{K/k})\cong \DMod(K_{\U},D_{K_{\U}/\U})
$$
Below, we extend~\cite[Proposition~3.9(1)]{PhyllisMichael} to the case when $(k,D_k)$ is not necessarily differentially closed, which we also prove categorically.

\begin{theorem}\label{theor-main}
In the above notation, let $L$ be a PPV extension for $N_{K_{\U}}$ (which exists by~\cite[Theorem~3.5(1)]{PhyllisMichael}), $V:=N_L^{D_{K/k}}$, and  $\Gal^{D_K}(L/{K_{\U}})\subset \GL(V)$ be the parameterized differential Galois group of $L$ over ${K_{\U}}$. Suppose that $(k,D_k)$ is filtered-linearly closed. Then $N$ is isomonodromic if and only if $\Gal^{D_K}(L/{K_{\U}})$ is conjugate to a constant subgroup in $\GL(V)$.
\end{theorem}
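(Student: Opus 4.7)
The plan is to reduce to the case where the parameter field is $D_k$-closed and then invoke the parameterized differential Tannakian correspondence together with Proposition~\ref{prop-const1}.

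First, I would use Proposition~\ref{prop-2} to translate isomonodromicity of $N$ into the existence of a $D_k$-structure on $N$ in the $D_k$-category $\DMod(K,D_{K/k})$. Since the forgetful functor $\DMod(K,D_{K/k})\to\Vect(K)$ is a faithful differential functor by \cite[Theorem~5.1]{GGO}, Proposition~\ref{prop-descent} applies with $l=\U$, and this is precisely where the filtered-linearly closed hypothesis on $(k,D_k)$ is used. Combined with the identification $\U\otimes_k\DMod(K,D_{K/k})\cong\DMod(K_\U,D_{K_\U/\U})$ from \cite[Proposition~8.11]{GGO} and a second application of Proposition~\ref{prop-2} now over $(\U,D_\U)$, this shows that $N$ is isomonodromic if and only if $N_{K_\U}$ is isomonodromic over $(\U,D_\U)$.

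Second, after this descent the base $\U$ is $D_k$-closed, and I would invoke the parameterized differential Tannakian correspondence of \cite{GGO}: the $D_\U$-Tannakian subcategory $\langle N_{K_\U}\rangle^{\otimes}\subset\DMod(K_\U,D_{K_\U/\U})$ generated by $N_{K_\U}$ is equivalent, as a $D_\U$-Tannakian category, to $\Rep(G)$ via the fiber functor $M\mapsto (L\otimes_{K_\U}M)^{D_{K/k}}$, which sends $N_{K_\U}$ to $V$. Because this is an equivalence of $D_\U$-categories, a $D_\U$-structure on $N_{K_\U}$ in $\DMod(K_\U,D_{K_\U/\U})$ exists if and only if there is a $D_\U$-structure on $V$ in $\Rep(G)$. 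Now, since $\U$ is $D_k$-closed it is in particular linearly $D_\U$-closed, so every finite-dimensional $D_\U$-module over $\U$ is trivial; hence any $D_\U$-structure on $V$ in $\Rep(G)$ automatically makes the underlying $\U$-vector space a trivial $D_\U$-module. Proposition~\ref{prop-const1} then identifies this condition with the statement that $G$ is conjugate to a constant subgroup of $\GL(V)$, and chaining the three equivalences yields the theorem.

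The main obstacle I anticipate is the careful verification that the parameterized differential Tannakian correspondence of \cite{GGO} is genuinely an equivalence of $D_\U$-categories in the sense of Definition~\ref{defin-Dkobject}, so that $D_\U$-structures transfer faithfully across the fiber functor; all other ingredients are either available from the preceding sections (Propositions~\ref{prop-2},~\ref{prop-descent}, and~\ref{prop-const1}) or follow automatically from the linearly-$D_\U$-closedness of the new base. This last observation is what produces the purely categorical shortcut replacing the original argument in \cite{PhyllisMichael} and simultaneously allows the field of parameters to be only filtered-linearly closed rather than differentially closed.
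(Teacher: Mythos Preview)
Your proposal is correct and follows essentially the same route as the paper: translate isomonodromicity into a $D_k$-structure via Proposition~\ref{prop-2}, descend along $k\hookrightarrow\U$ via Proposition~\ref{prop-descent} (using the identification of \cite[Proposition~8.11]{GGO}), pass through the $D_{\U}$-Tannakian equivalence $\langle N_{K_{\U}}\rangle^{\otimes}\cong\Rep(G)$ furnished by \cite[Theorem~5.5]{GGO}, and conclude with Proposition~\ref{prop-const1}, the triviality of the underlying $D_{\U}$-module being automatic since $\U$ is linearly $D_{\U}$-closed. The obstacle you flag---that the Tannakian equivalence respects the differential structure---is exactly what \cite[Theorem~5.5]{GGO} provides, so no further work is needed there.
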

\begin{proof}
Let $\Cat$ be the subcategory in $\DMod({K_{\U}},D_{{K_{\U}}/\U})$ that is $D_k$-tensor generated by $N_{K_{\U}}$ (see~\cite[Definition~4.19]{GGO}).
Recall that $L$ defines a $D_k$-fiber functor
$$
\omega:\Cat\to\Vect(\U)
$$
such that $\omega(N_{K_{\U}})=V$ and $\Gal^{D_K}(L/{K_{\U}})$ is the associated $D_k$-group (see~\cite[Theorem~5.5]{GGO}). More precisely, there is an equivalence of $D_k$-categories
$$
\Cat\cong\Rep\left(\Gal^{D_K}(L/{K_{\U}})\right)
$$
sending $N_{K_{\U}}$ to $V$. Thus, combining Propositions~\ref{prop-2},~\ref{prop-descent}, and~\ref{prop-const1}, we obtain the required result.
\end{proof}


\subsection{Analytic interpretation}\label{sec:anal}

We will now explain in more detail the relation between the analytic notion of isomonodromicity and Definition~\ref{defin-isomon}.
Let $f:X\to S$ be a holomorphic submersion between connected complex analytic manifolds with connected fibers such that $f$ is topologically locally trivial. Let $E$ be a holomorphic vector bundle on $X$ and $\nabla_{X/S}$ be a relative flat holomorphic connection on $E$ over $S$ (that is, the connection $\nabla_{X/S}$ is defined only along vector fields on $X$ that are tangent to the fibers of $f$). For a subset $\Sigma\subset S$, put $$X_{\Sigma}:=f^{-1}(\Sigma).$$ In particular, $X_s$ denotes the fiber of $f$ at a point $s\in S$.

Let $U$ be a sufficiently small open neighborhood of a point $s\in S$ such that there is a smooth isomorphism $$\phi:U\times X_s\stackrel{\sim}\longrightarrow X_U$$ whose restriction to $\{s\}\times X_s$ coincides with the embedding $X_s\hookrightarrow X_U$. This gives a collection of smooth isomorphisms $$\phi_{st}:X_s\stackrel{\sim}\longrightarrow X_t,$$ where $t\in U$, and a section $\sigma:U\to X_U$. Also, choose a trivialization $$\psi:\C^n\times U\stackrel{\sim}\longrightarrow \sigma^*E.$$ Then the connection $\nabla_{X/S}$ defines a family of relative monodromy representations $\rho_t$, $t\in U$, as the composition
$$
\begin{CD}
\pi_1\left(X_s,\sigma(s)\right)@>\sim>>\pi_1\left(X_t,\sigma(t)\right)
@>>>\GL_n\left((\sigma^*E)_t\right)@>\sim>>\GL_n\left((\sigma^*E)_s\right)@>\sim>>\GL_n(\C).
\end{CD}
$$
The isomorphism classes of the representations $\rho_t$ do not depend on the choices of~$\phi$ and $\psi$.

We say that $(E,\nabla_{X/S})$ is {\it analytically isomonodromic} if the isomorphism classes of the relative monodromy representations $\rho_s$ are locally constant over $S$ (for example, see~\cite[\S1]{Sabbah}). It is shown in~\cite[Proof of 1.2(1), first step]{Sabbah} that $(E,\nabla_{X/S})$ is analytically isomonodromic if and only if, for any point $s\in S$, there is an open neighborhood $s\in U\subset S$ such that $\nabla_{X/S}$ extends to a flat holomorphic connection on $E$ over $X_U$ (see also~\cite[Theorem~A.5.2.3]{Sibuya} for the case of one-dimensional fibers). This is a version of Definition~\ref{defin-isomon} in the analytic context.

Let us give an analytic interpretation of Theorem~\ref{thm:54}. By definition, $(E,\nabla_{X/S})$ is analytically isomonodromic along a holomorphic vector field $v$ on $S$ if and only if the relative monodromy representations are locally constant along (local) holomorphic curves on $S$ that are tangent to $v$. Thus, $(E,\nabla_{X/S})$ is analytically isomonodromic if and only if it is analytically isomonodromic along $d$ transversal vector fields on $S$, where $d:=\dim(S)$. Combining this with the property of analytic isomonodromicity discussed above, we obtain an analytic proof of following weaker version of Theorem~\ref{thm:54}:

Let $k$ (respectively, $D_k$) be the field of meromorphic functions (respectively, the space of meromorphic vector fields) on $S$. Analogously, define $K$ and $D_K$ for $X$ in place of $S$. Assume that a finite-dimensional $D_{K/k}$-module $N$ satisfies the partial isomonodromicity condition from Theorem~\ref{thm:54}. Then there is a point $s\in S$ such that $N$ is isomonodromic over the parameterized differential field $K_s$ over $k_s$, where $k_s$ is the field of meromorphic functions on open neighborhoods of $s$ in $S$ and $K_s$ is the field of meromorphic functions on open subsets in $X$ whose intersection with $X_s$ is dense in $X_s$.

In general, one cannot replace $K_s$ by the field of meromorphic functions along all $X_s$. However, the results from~\cite{JM,JMU} allow to similarly treat the latter case when the fibers of $f$ are complex projective lines with finite sets of points removed. Finally, the need of replacing $k$ by $k_s$ reflects the requirement for $(k,D_k)$ to be filtered-linearly closed in Theorem~\ref{thm:54}.

\subsection{Examples}\label{subsection-exampisom}

First, we provide a non-trivial example to Theorem~\ref{thm:54} showing that its statement is not true for an arbitrary field $(k,D_k)$. Namely, in the notation Example~\ref{example-new},  we construct a parameterized field $K$ over the field $(k,D_k)$ and a PPV extension $K\subset L$ such that $\Gal^{D_K}(L/K)\cong G$ and the solution space corresponds to the representation~$V$. We are very grateful to M.\,Singer, who suggested a general method for constructing PPV extensions with a given parameterized differential Galois group to us.

\begin{example}\label{examp-isomon}
The following example is based on iterated integrals. Let $k:=\Q(t_1,t_2)$ and $D_k:=k\cdot\partial_{t_1}\oplus k\cdot\partial_{t_2}$. Let
$$
F:=k{\big({\partial_x^{i}\partial_{t_1}^{j_1}\partial_{t_2}^{j_2} I_m}\big)},\quad i,j_1,j_2 \Ge 0,\, m=1,2,
$$
be the field of $\{\partial_x,\partial_{t_1},\partial_{t_2}\}$-rational functions in the differential indeterminates $I_1$ and $I_2$ over $k$. Put
$$
D_F:=F\cdot\partial_x\oplus F\cdot\partial_{t_1}\oplus F\cdot\partial_{t_2},
$$
and do the analogous for the other fields that appear in what follows. Then $(F,D_F)$ is a parameterized differential field over $(k,D_k)$. Let $L$ be a PPV extension of $F$ for the equation
\begin{equation}\label{eq:I1I2eq}
\partial_x(y)=\partial_{x}I_1\cdot I_2
\end{equation}
and let $I\in L$ be a solution of this equation (for example, see Proposition~\ref{prop-PPV} for the existence of $L$). A calculation shows that there are no elements $a\in F$ and linear $D_k$-operators $D$ with coefficients in $k$ such that
$
\partial_x(a)=D(\partial_{x}I_1\cdot I_2)$.
By Proposition~\ref{prop-intGalois}, we see that
$$
\Gal^{D_F}(L/F)\cong\G_a
$$
and, therefore, the elements $\partial^iI\in L$, $i\Ge 0$, are algebraically independent over $F$. Let
$
K\subset L
$
be the $\{\partial_x,\partial_{t_1},\partial_{t_2}\}$-subfield generated by
$$
\partial_x I_m,\,\partial_{t_1}I_m,\,\partial_{t_2}I_m,\quad m=1,2,\quad\quad
J_1:=\partial_{t_1}I-\partial_{t_1}I_1\cdot I_2-I_2/t_1,\quad
J_2:=\partial_{t_2}I-\partial_{t_2}I_1\cdot I_2+I_1/t_2.
$$
Since $I$ satisfies~\eqref{eq:I1I2eq} and $J_1,J_2\in K$, for all $(i,j_1,j_2)\ne (0,0,0)$, we have
$$\partial_x^{i}\partial_{t_1}^{j_1}\partial_{t_2}^{j_2}(I)\in K(I_2).$$
Therefore,
$$
L=K(I_1,I_2,I).
$$
One can show that $I_1,I_2,I$ are algebraically independent over $K$ using a characteristic set argument with respect to any orderly ranking of the derivatives with $I > I_1 > I_2$ \cite[Sections~I.8--10]{Kol}.
Put
$$
f_i:=\partial_x I_i\in K,\quad i=1,2,
$$
and consider the equation
\begin{equation}\label{eq:1955}
\partial_x(y)=A_{\partial_x}\cdot y,\quad y:={^t(y_1,y_2,y_3)},\quad
A_{\partial_x}:=\begin{pmatrix}
0&f_1&0\\
0&0&f_2\\
0&0&0
\end{pmatrix}.
\end{equation}
Then
$$
\Phi:=\begin{pmatrix}
1&I_1&I\\
0&1&I_2\\
0&0&1
\end{pmatrix}
$$
is the fundamental matrix for the equation~\eqref{eq:1955}, that is, $I$ is the iterated integral $\int_x{\left(f_1\cdot\int_x f_2\right)}$. Hence, $L$ is a PPV extension of $K$ for the equation~\eqref{eq:1955}.

In what follows, $U$ and $G$ are as in Example~\ref{example-new}. We see that $\Gal^{D_K}(L/K)$ is a linear $D_k$-subgroup in $U$, where $U$ acts on $\Phi$ by multiplication on the right. Explicitly, we have
$$
g(u_1,u_2,v)(I_i)=I_i+u_i,\quad g(u_1,u_2,v)(I)=I+I_1u_2+v.
$$
A calculation shows that $K\subset L^G$. By a dimension argument, we conclude that $\Gal^{D_K}(L/K)=G$. By Example~\ref{example-new}, the equation~\eqref{eq:1955} is not isomonodromic. On the other hand, this equation is $\partial_{t_i}$-isomonodromic, $i=1,2$ with the corresponding matrices given by
$$
B_i:=\Phi\cdot \tilde{B}_i\cdot \Phi^{-1} + \partial_{t_i}\Phi\cdot\Phi^{-1},\quad i=1,2,\quad\quad
\tilde{B}_1:=\begin{pmatrix}
0&1/t_1&0\\
0&0&0\\
0&0&0
\end{pmatrix},\quad
\tilde{B}_2:=\begin{pmatrix}
0&0&0\\
0&0&1/t_2\\
0&0&0
\end{pmatrix}.
$$
More explicitly,
$$
B_1:=\begin{pmatrix}
0&1/t_1+ \partial_{t_1}I_1& J_1\\
0&0& \partial_{t_1}I_2\\
0&0&0
\end{pmatrix},\quad B_2=\begin{pmatrix}
0& \partial_{t_2} I_1& J_2\\
0&0& 1/t_2+\partial_{t_2}I_2\\
0&0&0\end{pmatrix}
$$
Thus, we see that Theorem~\ref{thm:54} is not true for an arbitrary field $(k,D_k)$.
\end{example}

The purpose of the rest of the section is to show that, in Theorem~\ref{theor-main}, one really needs to take the extension of scalars from $k$ to $\U$ in order to obtain conjugacy to a constant group. Namely, we construct examples of an isomonodromic $D_{K/k}$-module $N$ such that there are PPV extensions of $K$ for $N$, but, for any PPV extension $L$, the parameterized differential Galois group $\Gal^{D_K}(L/K)$ is not a constant group and, thus, $\Gal^{D_K}(L/K)$ is not conjugate to a constant subgroup in $\GL_n(k)$.

The idea of the examples is as follows. We construct a parameterized
differential field $(K,D_K)$ over $(k,D_k)$ and $\omega\in
\Omega_{K/k}$ with $\dd\omega=0$ such that the $D_k$-submodule
$$M\subset H^1(K/k)$$ generated by $[\omega]$ is finite-dimensional
over $k$ and is not trivial as a $D_k$-module. By
Propositions~\ref{prop-intGalois} and~\ref{prop-PPV}, there are PPV
extensions of $K$ for the equation $\dd y=\omega$ and any of them
has the parameterized differential Galois group isomorphic to
$\left(M^\vee\right)^{D_k}$.
Since $\Rep{\left(\left(M^\vee\right)^{D_k}\right)}$ is $D_k$-equivalent to
a $D_k$-subcategory in $\DMod\left(K,D_{K/k}\right)$, the (faithful)
$D_k$-representation $V_{M^\vee}$ of $\left(M^\vee\right)^{D_k}$ (see
Remark~\ref{remark-nonconstGa}\eqref{en:719}) corresponds to an
isomonodromic $D_{K/k}$-module $N$ (see Proposition~\ref{prop-2}).

Let us give an explicit description. Suppose that $$D_k=k\cdot\partial_t,\quad
D_K=K\cdot\partial_x\oplus K\cdot\partial_t,\quad \text{and}\quad
[\partial_x,\partial_t]=0.$$ Let $\omega_x\in \Omega_{K/k}$ be such
that $\omega_x(\partial_x)=1$. Then $\omega=b\cdot\omega_x$ with
$b\in K$. Suppose that there exists a non-zero monic linear $\partial_t$-operator $D$ as in Example~\ref{examp-paramGG}\eqref{remark-expliintGalois}. Explicitly,
$$
D=\partial_t^n-\sum_{i=0}^{n-1} c_i\partial_t^i,\quad c_i\in k,
$$
is of the smallest order such that there is $a\in K$ with $D(b)=\partial_x(a)$ (see~Example~\ref{examp-GM}\eqref{en:770}). One can show that the differential module $N$ defined above corresponds to the following system of linear differential equations:
$$
\partial_x(y)=A_{\partial_x}\cdot y,\quad y:={^t(y_0,\ldots,y_{n})},\quad
A_{\partial_x}:=\begin{pmatrix}
0&0&\ldots&0\\
b&0&\ldots&0\\
\partial_t(b)&0&\ldots&0\\
\vdots&&\ldots&&\\
\partial^{n-1}_t(b)&0&\ldots&0
\end{pmatrix}.
$$
By Proposition~\ref{prop-PPV}, there is a PPV extension $L$ of $K$ for $N$ and $L=K(z,\partial_t(z),\ldots)$, where $\partial_x(z)=b$. By Proposition~\ref{prop-intGalois} and its proof combined with Example~\ref{examp-paramGG}\eqref{remark-expliintGalois}, the morphism of linear $D_k$-groups
$$
\Gal^{D_K}(L/K)\to\G_a,\quad g\mapsto g(z)-z
$$
induces an isomorphism
$$
\Gal^{D_K}(L/K)\stackrel{\sim}\longrightarrow \{u \in \G_a\:|\:Du=0\}\subset\G_a.
$$
The $\partial_x$-module $N$ is isomonodromic with
$$
A_{\partial_t}:=\begin{pmatrix}
0&0&0&\ldots&0\\
0&0&1&\ldots&0\\
\vdots&\vdots&&\ddots&\\
0&0&0&\ldots&1\\
a&c_0&c_1&\ldots&c_{n-1}
\end{pmatrix}.
$$
Now we give concrete
examples with $k=\Q(t)$ and $K$ being a generated by functions in $t$ and $x$. We construct $b\in K$ such that there exists a linear $\partial_t$-operator $D$ as above and the equation $Du=0$ in $u$ is non-trivial over $\Q(t)$.

\begin{example}\label{examp-1}
This examples comes from the algebraic independence of the derivatives of the incomplete Gamma-function (see~\cite{JRR}). Put
$$
E:=\Q{\left(t,x,\log x,x^{t-1}e^{-x}\right)},\quad D_E:=E\cdot\partial_x\oplus E\cdot\partial_t.
$$
By Proposition~\ref{prop-PPV}, there is a PPV extension $L$ of $E$
for the equation
\begin{equation}\label{eq:1393}
\partial_x(y)=x^{t-1}e^{-x}.
\end{equation}
As noted in~\cite[Example~7.2]{PhyllisMichael}, by~\cite{JRR}, there is an isomorphism
$
\Gal^{D_E}(L/E)\cong \G_a$.
Let $\gamma\in K$ be a solution of~\eqref{eq:1393} and put
$$
K:=E{\left(\partial_t(\gamma)-\gamma,\partial^2_t(\gamma)-\partial_t(\gamma),\ldots\right)}\subset L.
$$
Since $\Gal^{D_E}(L/E)\cong \G_a$, the parameterized Galois theory implies that $\gamma\notin K$. The element
$
b:=x^{t-1}e^{-x}\in K
$
satisfies
$$
D(b)=\partial_x(a),\quad D:=\partial_t-1,\quad a:=\partial_t(\gamma)-\gamma\in K.
$$
The operator $D$ is of the smallest order, because $b\notin \partial_x(K)$ as $\gamma\notin K$.
Note that $K$ is of infinite transcendence degree over $\Q(t,x)$, because $\Gal^{D_E}(K/E)\cong\G_a$.
\end{example}

\begin{example}\label{examp-2}
This example comes from the Gauss--Manin connection for the Legendre family of elliptic curves. Namely, put $K:=\Q(t,x,z)$, where $z^2=x(x-1)(x-t)$.
Then the element $$b:=1/z\in K$$
satisfies
$$
D(b)=\partial_x(a),\quad D:=-2t(t-1)\partial_t^2-(4t-2)\partial_t-1/2,\quad a:=z/(x-t)^2\in K.
$$
The operator $D$ is of the smallest order (for example, this follows from a monodromy argument, see~\cite[\S2.10]{Clemens}).
\end{example}

\bibliographystyle{model1b-num-names}
\bibliography{difftanncat}

\end{document}